\documentclass[a4paper, 11pt]{amsart}

\usepackage{epsfig}
\usepackage{amsthm,amsfonts}
\usepackage{amssymb,graphicx,color}
\usepackage[all]{xy}
\usepackage{cancel} 
\usepackage{verbatim}
\usepackage{hyperref}
\usepackage{enumitem}
\usepackage{comment}

\newtheorem{innercustomthm}{Theorem}
\newenvironment{customthm}[1]
  {\renewcommand\theinnercustomthm{#1}\innercustomthm}
  {\endinnercustomthm}
  
\newtheorem{innercustomcor}{Corollary}
\newenvironment{customcor}[1]
  {\renewcommand\theinnercustomcor{#1}\innercustomcor}
  {\endinnercustomcor}

\newtheorem{theorem}{Theorem}[section]
\newtheorem*{theorem*}{Theorem}
\newtheorem{lemma}[theorem]{Lemma}
\newtheorem{corollary}[theorem]{Corollary}
\newtheorem{proposition}[theorem]{Proposition}
\newtheorem{definition}[theorem]{Definition}
\newtheorem{example}[theorem]{Example}
\newtheorem{claim}{Claim}[theorem]

\newtheorem{conjecture}{Conjecture}

\newtheorem{problem}{Problem}

\newcommand{\C}{\mathbb{C}}

\newcommand{\Q}{\mathbb{Q}}

\begin{document}

\title[On the Arnold, Zariski and tang. Milnor fiber problems]
{On the Arnold corank, the Zariski multiplicity, and the tangent Milnor fiber problems}

\author[J. E. Sampaio]{Jos\'e Edson Sampaio}
\address{Jos\'e Edson Sampaio: Departamento de Matem\'atica, Universidade Federal do Cear\'a,
	      Rua Campus do Pici, s/n, Bloco 914, Pici, 60440-900, 
	      Fortaleza-CE, Brazil.} 
 \email{edsonsampaio@mat.ufc.br}

\subjclass[2010]{MSC 14B05, MSC 32S50 \and MSC 32S05}

\keywords{Arnold corank problem, Multiplicity,  Corank, Tangent Milnor fiber, Zariski multiplicity conjecture}

\thanks{The author was partially supported by CNPq-Brazil grants 303375/2025-6, 404611/2025-7 and 405618/2023-9. This work was supported by the Serrapilheira Institute (grant number Serra -- R-2110-39576).}

\begin{abstract}
In this article, we approach the Arnold corank problem posed by Arnold himself in 1975, which asks whether the corank of a holomorphic function is a topological invariant.  Here, the corank of a holomorphic function is the corank of its Hessian matrix at 0, which is one of the most basic and fundamental invariants of a holomorphic function. 
Here, we prove the topological invariance of the corank mod 2. Consequently, we obtain another proof of the Arnold corank problem in dimension 3, which was recently proved by Fernandes, Jelonek, and the author of this article.
We also prove that the corank is constant in a continuous family of holomorphic functions with isolated singularities and constant Milnor number.
We also approach a conjecture recently posed by Budur, Fernández de Bobadilla, Lê, and Nguyen (2022), which states that if two germs of holomorphic functions are right topologically equivalent, then the Milnor fibers of their initial forms are homotopy equivalent. 
We prove that a (much) weaker version of this conjecture, which states that if two holomorphic functions are right topologically equivalent, then the Milnor fibers of their initial forms have the same Euler characteristic, is equivalent to the Zariski multiplicity conjecture for right topological equivalence. Some other results are also presented here. For instance, the multiplicity $> 2$ of generalized semi-Bireskorn-Pham hypersurfaces is an invariant for the right topological equivalence. 
\end{abstract}

\maketitle

\tableofcontents
\section{Introduction}

Let $f\colon (\C^n,0)\to (\C,0)$ be the germ of a holomorphic function at the origin. Let $(V(f),0)$ be the germ of the zero set of $f$ at the origin. We recall that the {\bf corank} of the germ of a holomorphic function $f\colon (\C^n,0)\to (\C,0)$, with $\nabla f(0)=0$, is the corank of the Hessian matrix $f$ at $0$. We denote the corank of $f$ by ${\rm crk}(f)$. When $f$ is a reduced function, the {\bf corank} of $(V(f),0)$ is ${\rm crk}(V(f),0):={\rm crk}(f)$, and the {\bf rank} of $(V(f),0)$ is ${\rm rk}(V(f),0):=n-{\rm crk}(f)$. For simplicity, we write ${\rm rk}(f)$ instead of ${\rm rk}(V(f),0)$.

In 1975, Arnold asked whether the corank of a holomorphic function is an ambient topological invariant. More precisely, we have the following problem:
 
\begin{problem}[Arnold corank problem] 
Let $f,g\colon (\C^n,0)\to (\C,0)$ be germs of reduced holomorphic functions with a critical point at the origin. If there is a homeomorphism $\varphi\colon(\C^n,0)\to (\C^n,0)$ such that $\varphi(V(f))=V(g)$, is it true that ${\rm crk}(f)={\rm crk}(g)$?
\end{problem}
This problem is also stated in the book Arnold's Problems (see \cite[Problem 1975-14, p. 22]{Arnold:2004}). In this book, there is also a comment from Vassiliev that nothing is known to him about this problem (see \cite[p. 284]{Arnold:2004}).

Recently, Fernandes, Jelonek, and the author of this article proved in \cite{FernandesJS:2025} that the metric version of the Arnold corank problem has a positive answer, i.e., they proved that if there is a lipeomorphism $\varphi\colon(\C^n,0)\to (\C^n,0)$ such that $\varphi(V(f))=V(g)$, then ${\rm crk}(f)={\rm crk}(g)$. They also proved that the Arnold corank problem, when $n=3$, has a positive answer.

In this article, we prove that the Arnold corank mod 2 problem holds true.

\begin{customthm}{\ref*{thm:corank_mod_two}}
Let $f,g\colon (\C^n,0)\to (\C,0)$ be germs of reduced holomorphic functions with a critical point at the origin. Assume that there is a homeomorphism $\varphi\colon(\C^n,0)\to (\C^n,0)$ such that $\varphi(V(f))=V(g)$. Then ${\rm crk}(f)\equiv {\rm crk}(g) \mod 2$. 
\end{customthm}

We also present a proof, different from the one presented in \cite{FernandesJS:2025}, that the Arnold corank problem, when $n=3$, has a positive answer. To prove their result, the authors in \cite{FernandesJS:2025} used the embedded topological invariance of multiplicity 2 in 3 variables, announced by Aznar in \cite{Aznar:1980} (the authors in \cite{FernandesJS:2025} provided a proof of this fact), the Splitting Lemma for non-necessarily isolated singularities proved in \cite{GreuelP:2026}, and the famous classification of quotient singularities proved by Prill in \cite{Prill:1964}. Here, we still use the embedded topological invariance of multiplicity 2 in 3 variables, but we do not use the Splitting Lemma or the classification of quotient singularities.

We also obtain that the Arnold corank problem holds true in any continuous family of holomorphic functions $\{f_t\}_t$ such that the Milnor number $\mu(f_t)$ is independent of $t$ and is finite (see Theorem \ref{thm:corank_family}). Consequently, we obtain the following result in a purely algebraic setting:

\begin{customcor}{\ref*{rem:algebraic_corank}}
Let $k$ be a field of characteristic zero and $f\in k[t][\![ z_1,\dots,z_n ]\! ]$. Write $f_{t}=f(t,\cdot)\in k [\![ z_1,\dots,z_n ]\! ]$; so $t\mapsto f_{t}$ is an algebraic family of formal power series. Assume that the Milnor number of $f_{t}$ is finite, positive, and independent of $t$. Then the corank of $f_t$ is independent of $t$. 
\end{customcor}

Let $f\colon (\C^n,0)\to (\C,0)$ be the germ of a reduced holomorphic function at the origin. We recall that the \emph{multiplicity of} $V(f)$ at the origin, denoted by $m(V(f),0)$, is defined as follows: we write
$$f=f_m+f_{m+1}+\cdots+f_k+\cdots,$$
where each $f_k$ is the zero polynomial or a homogeneous polynomial of degree $k$, and $f_m\neq 0$. Then, $m(V(f),0):= m$. In this case, we also write $in(f):=f_m$.

In 1971 (see \cite{Zariski:1971}), Zariski asked whether the multiplicity of analytic hypersurfaces is an embedded topological invariant. This question, in its stated version, is the famous Zariski multiplicity conjecture.

\begin{conjecture}[Zariski multiplicity conjecture]
Let $f,g\colon (\C^n,0)\to (\C,0)$ be germs of reduced holomorphic functions. If there is a homeomorphism $\varphi\colon(\C^n,0)\to (\C^n,0)$ such that $\varphi(V(f))=V(g)$, then $m(V(f),0)=m(V(g),0)$.
\end{conjecture}

Recently, in \cite{BudurFLN:2020}, the following conjecture was proposed:

\begin{conjecture}\label{conj:Milnor_fiber_initial_form}
Let $f,g\colon(\mathbb{C}^n,0)\to (\mathbb{C},0)$ be two holomorphic function-germs. If there is a homeomorphism $\varphi\colon(\mathbb{C}^n,0)\to (\mathbb{C}^n,0)$ such that $f=g\circ \varphi$, then $in(f)^{-1}(1)$ and $in(g)^{-1}(1)$ are homotopy equivalent.
\end{conjecture}

More recently, the author of this article in \cite{Sampaio:2026} proposed the following conjectures, which are versions of the Zariski multiplicity Conjecture and Conjecture \ref{conj:Milnor_fiber_initial_form}.

\begin{conjecture}\label{conj:zariski_right}
Let $f,g\colon(\mathbb{C}^n,0)\to (\mathbb{C},0)$ be two reduced holomorphic function-germs. If there is a homeomorphism $\varphi\colon(\mathbb{C}^n,0)\to (\mathbb{C}^n,0)$ such that $f=g\circ \varphi$, then $m(V(f),0)=m(V(g),0)$.
\end{conjecture}

\begin{conjecture}\label{conj:Milnor_fiber_initial_form_homology}
Let $f,g\colon(\mathbb{C}^n,0)\to (\mathbb{C},0)$ be two holomorphic function-germs. If there is a homeomorphism $\varphi\colon(\mathbb{C}^n,0)\to (\mathbb{C}^n,0)$ such that $f=g\circ \varphi$, then $in(f)^{-1}(1)$ and $in(g)^{-1}(1)$ have the same homology.
\end{conjecture}

Here, we consider the following weaker conjecture.

\begin{conjecture}\label{conj:Milnor_fiber_initial_form_Euler_char}
Let $f,g\colon(\mathbb{C}^n,0)\to (\mathbb{C},0)$ be two holomorphic function-germs. If there is a homeomorphism $\varphi\colon(\mathbb{C}^n,0)\to (\mathbb{C}^n,0)$ such that $f=g\circ \varphi$, then $\chi(in(f)^{-1}(1))=\chi (in(g)^{-1}(1))$.
\end{conjecture}

In \cite{BobadillaP:2022}, it was proved that Conjecture \ref{conj:zariski_right} 
has a positive answer in the case of families of hypersurfaces with isolated singularities, and with that, it was proved that in the case of a topologically trivial family of hypersurfaces with isolated singularities, the Milnor fibers of the initial part have the same Borel-Moore cohomology (see \cite[Remark 8.2]{BobadillaP:2022}). In \cite{Sampaio:2026}, it was proved that if Conjecture \ref{conj:Milnor_fiber_initial_form_homology} has a positive answer, then Conjecture \ref{conj:zariski_right} also has a positive answer.

In the direction of proving that these two problems are equivalent, we prove the following:
\begin{customthm}{\ref*{thm:zariski_implies_tangent_conj_Euler_char}}
Let $f,g\colon(\mathbb{C}^n,0)\to (\mathbb{C},0)$ be two reduced holomorphic function-germs. Assume that there is a homeomorphism $\varphi\colon(\mathbb{C}^n,0)\to (\mathbb{C}^n,0)$ such that $f=g\circ \varphi$. If $m(V(f),0)=m(V(g),0)$, then $in(f)^{-1}(1)$ and $in(g)^{-1}(1)$ have the same Euler characteristic.
\end{customthm}
In particular, we obtain that if Conjecture \ref{conj:zariski_right} has a positive answer, then Conjecture \ref{conj:Milnor_fiber_initial_form_Euler_char} has a positive answer as well.
Indeed, we prove here that Conjectures \ref{conj:zariski_right} and \ref{conj:Milnor_fiber_initial_form_Euler_char} are equivalent.
\begin{customthm}{\ref*{thm:zariski_equiv_tangent_conj_Euler_char}}
Conjecture \ref{conj:zariski_right} has a positive answer if and only if Conjecture \ref{conj:Milnor_fiber_initial_form_Euler_char} has a positive answer.
\end{customthm}

Coming back to the Zariski multiplicity problem, it was proved in \cite{YoshinagaS:1978} that the multiplicity of Brieskorn-Pham hypersurfaces is invariant under ambient homeomorphisms, i.e., if $X,Y\subset \C^n$ are two Brieskorn-Pham hypersurfaces such that there is a homeomorphism $h\colon (\C^n,X,0)\to (\C^n,Y,0)$, then $m(X,0)=m(Y,0)$. 

Here, we deal with the following more general class of singular hypersurfaces introduced in \cite{FernandesJS:2025}:

\begin{definition}
A holomorphic function $f\colon (\C^n,0)\to (\C,0)$
is said a {\bf generalized semi-Brieskorn-Pham} if there are a holomorphic diffeomorphism $\phi\colon (\C^n,0)\to (\C^n,0)$, a holomorphic function $h\colon (\C^n,0)\to (\C,0)$, and positive integer numbers $k, a_1,...,a_{k-1}$ and $a_k$ such that 
\begin{enumerate}
\item $a_1\leq a_2\leq ....\leq a_k$;
\item ${\rm ord}_0h>a_k$;
 \item $f\circ \phi(z_1,...,z_n)=z_1^{a_1}+...+z_k^{a_k}+h(z_1,...,z_n)$ for all $z=(z_1,...,z_n)$ close to $0$.
\end{enumerate}
In this case, if $a_1=1$, we define $m_1(f)=1$, and if $a_1>1$, we define $m_1(f):=\max\{i;a_i=a_1\}$. When $f$ is a generalized semi-Brieskorn-Pham function, we also say that $V(f)$ is a {\bf generalized semi-Brieskorn-Pham hypersurface}.
\end{definition}

Any Brieskorn-Pham polynomial is a generalized semi-Brieskorn-Pham function. However, the class of generalized semi-Brieskorn-Pham hypersurfaces is much larger than the class of Brieskorn-Pham hypersurfaces. For instance, a generalized semi-Brieskorn-Pham hypersurface does not necessarily have isolated singularities. 
\begin{example}\label{exam:linear_cone}
\begin{itemize}
    \item [a)] Any holomorphic function $f\colon (\C^n,0)\to (\C,0)$ such that $C(V(f),0)$ is a linear subspace is a generalized semi-Brieskorn-Pham function;
    \item [b)] Any irreducible holomorphic function $f\colon (\C^2,0)\to (\C,0)$ is a generalized semi-Brieskorn-Pham function;
    \item [c)] If $f\colon (\C^n,0)\to (\C,0)$ is a generalized semi-Brieskorn-Pham function, then $\tilde f\colon (\C^{n+1},0)\to (\C,0)$ given by $\tilde f(t,x)=t^m+f(x)$ is a generalized semi-Brieskorn-Pham function as well. In particular, the surfaces of the type $\{(x,y,z)\in \C^3;x^m+f(y,z)=0\}$ with $f\colon (\C^2,0)\to (\C,0)$ being an irreducible holomorphic function, studied by Mendris and N\'emethi in \cite{MendrisN:2005}, are generalized semi-Brieskorn-Pham hypersurfaces.
\end{itemize}
\end{example}

If $f,g\colon (\C^n,0)\to (\C,0)$ are generalized semi-Brieskorn-Pham functions such that $m(V(f),0),m(V(g),0)\not=2$ and there is a homeomorphism $\phi\colon (\C^n,0)\to (\C^n,0)$ with $\phi(V(f))=V(g)$, then we obtain that $m(V(f),0)=m(V(g),0)$ and $m_1(f)=m_1(g)$ (see Proposition \ref{prop:semi_top_mult}).

As a consequence, we obtain that the multiplicity of generalized semi-Brieskorn-Pham hypersurfaces in $\C^3$ is a topological invariant (see Corollary \ref{cor:inv_mult_gsBP_surfaces}).

In particular, we obtain the result proved by Mendris and N\'emethi \cite[Corollary 2]{MendrisN:2005}, which states that the multiplicity of surfaces of the type $\{(x,y,z)\in \C^3;x^m+f(y,z)=0\}$, where $f\colon (\C^2,0)\to (\C,0)$ is an irreducible holomorphic function, is a topological invariant (see Corollary \ref{cor:inv_mult_Mendris-Nemethi_surfaces}).

Note that if $f,g\colon(\mathbb{C}^n,0)\to (\mathbb{C},0)$ are  holomorphic function-germs with isolated singularities and there is a homeomorphism $\psi\colon(\mathbb{C}^n,0)\to (\mathbb{C}^n,0)$ such that $\psi(V(f))=V(g)$, then using results of King and Saeki (see Corollary in \cite{King:1978} and Corollary 2 in \cite{Saeki:1989}), there is a homeomorphism $\varphi\colon(\mathbb{C}^n,0)\to (\mathbb{C}^n,0)$ such that $f=g\circ \varphi$ or $f=g_1\circ \varphi$, where $g_1=c_1\circ g\circ c_n$, and $c_k\colon \C^k\to \C^k$ is the conjugation mapping given by $c_k(z_1,...,z_k)=(\overline{z_1},...,\overline{z_k})$.
Note also that ${\rm crk} (g)={\rm crk} (g_1)$, $m(V(g),0)=m(V(g_1),0)$, and moreover, $in(g)^{-1}(1)$ and $in(g_1)^{-1}(1)$ are  diffeomorphic. Thus, when we are only dealing with holomorphic functions with isolated singularities, the results presented here for topological right equivalence also hold for topological $V$-equivalence.

\bigskip

\noindent{\bf Acknowledgements}. The author would like to thank Alexandre Fernandes, Euripedes da Silva, and Joserlan da Silva for their interest in this research.

\section{Preliminaries}

\noindent {\bf Notation:} 
Let $f\colon  (\mathbb{C}^{n}, 0) \to (\mathbb{C}, 0)$ be a holomorphic germ. We denote $V(f)=f^{-1}(0)$ and $F_f$ as the Milnor fiber of $f$ at $0$. When $f$ is a homogeneous polynomial, $F_f=f^{-1}(1)$. Let $m=\text{ord}_0f$ denote the order of $f$ at the origin. Let $in(f)$ denote the initial homogeneous polynomial of degree $m$ in the Taylor expansion of $f$. The multiplicity of $V(f)$ at the origin is denoted by $m(V(f),0)$, which coincides with $\text{ord}_0f$ whenever $f$ is the germ of a reduced function.

The monodromy of the Milnor fibration of $f$ (at $0$) is $h_f\colon F_f\to F_f$. Moreover, $\Lambda(h_f^k)$ denotes the Lefschetz number of $h_f^k$ (the composition of the monodromy of $f$ with itself $k$ times). 

We denote the projectivized tangent cone of $V(f)$ by $PV(in(f)) \subset \mathbb{C}P^n$, defined as the projectivized zero locus of the initial part:
\[ PV(in(f)) := \{ [z] \in \mathbb{C}P^{n-1} \mid in(f)(z) = 0 \}. \]

\subsection{Some results from A'Campo and L\^e}

The following result was proved separately by A'Campo in \cite{ACampo:1973} and L\^e in \cite{Le:1973}.
\begin{theorem}[A'Campo-L\^e's Theorem]
Let $f,g\colon(\mathbb{C}^n,0)\to (\mathbb{C},0)$ be two complex analytic functions such that there is a homeomorphism $\varphi\colon(\mathbb{C}^n,V(f),0)\to (\mathbb{C}^n,V(g),0)$. If $m(V(f),0)=1$ then $m(V(g),0)=1$ as well.
\end{theorem}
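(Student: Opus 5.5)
A'Campo and Lê each prove this by showing that multiplicity one is detected by the Lefschetz number of the monodromy; I would follow that route. The first step is to pass from the embedded homeomorphism to topological invariance of the Milnor fibration. Since $\varphi$ maps $(\C^n,V(f),0)$ to $(\C^n,V(g),0)$, it restricts to a homeomorphism of the complements $\C^n\setminus V(f)\to \C^n\setminus V(g)$ near the origin. By Milnor's fibration theorem, the complement of the hypersurface in a small ball fibres over the punctured disc, and is therefore homotopy equivalent to a fibration over the circle with fibre $F_f$ and monodromy $h_f$. Using Lê's observation that the homotopy type of the Milnor fibration is determined by the local embedded topological type, one obtains a fibre homotopy equivalence between the Milnor fibrations of $f$ and $g$, up to orientation of the base circle. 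From this one gets equalities of Lefschetz numbers $\Lambda(h_f^k)=\Lambda(h_g^{\pm k})$ for all $k$. I would take $\Lambda(h_f)=\Lambda(h_g)$ from the case $k=1$; if the orientation is reversed, $h_g^{-1}$ has the same Lefschetz number because it is conjugate to the inverse of a homeomorphism of a space with finitely generated homology, by the standard duality/trace argument.

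The second step is the key computation: if $m(V(f),0)\ge 2$, then $\Lambda(h_f)=0$. I would follow A'Campo and take an embedded resolution $\pi\colon X\to\C^n$ of $V(f)$ that begins with the blow-up of the origin. Write $E_i$ for the exceptional divisors, $N_i$ for the multiplicity of $f\circ\pi$ along $E_i$, and $E_i^\circ$ for the part of $E_i$ that meets no other component of the total transform. A'Campo's formula gives
\[
\Lambda(h_f^k)=\sum_{N_i\mid k} N_i\,\chi(E_i^\circ).
\]
For $k=1$, only components with $N_i=1$ contribute. Every exceptional divisor lies over $0$, so it has $N_i\ge m\ge 2$ because $f$ vanishes to order $m$ along the first blow-up centre. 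The strict transform of $V(f)$ does not lie in $\pi^{-1}(0)$, so it does not contribute to the Milnor fibre computation at $0$. Hence the sum is empty and $\Lambda(h_f)=0$.

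The third step handles the case $m(V(f),0)=1$. Then $f$ is a submersion at $0$, so $F_f$ is contractible and $h_f$ can be taken to be the identity, giving $\Lambda(h_f)=1$. Combining the three steps, if $m(V(f),0)=1$ then $\Lambda(h_g)=\Lambda(h_f)=1\ne 0$, and the second step forces $m(V(g),0)=1$.

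The main obstacle is the first step: extracting a genuine fibre homotopy equivalence of the Milnor fibrations from a homeomorphism of pairs that is not compatible with the functions. I would argue, following Lê, that the complement $B_\epsilon\setminus V(f)$ has the homotopy type of the total space of the Milnor fibration. For $n\ge 3$, the fibre is, up to homotopy, the infinite cyclic cover determined by the class of a small loop around $V(f)$, and $\varphi$ preserves that class up to sign. The monodromy is the deck transformation, so its homotopy class and hence its Lefschetz number transfer. The case $n=2$ needs this same covering-space description; it still holds because the Milnor fibre is connected.
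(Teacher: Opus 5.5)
The paper gives no proof of this statement; it quotes it from A'Campo and L\^e. Your argument is the classical one behind it. In the paper's language, Theorem~\ref{thm:monodromy} transfers $\Lambda(h_f)=1$ to $g$, and Theorem~\ref{thm:acampo_complement} then gives $m(V(g),0)\le 1$.

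The outline is sound, but the handling of orientation reversal rests on a false general claim. A homeomorphism $h$ of a space with finitely generated homology need not satisfy $\Lambda(h^{-1})=\Lambda(h)$. For the automorphism of $T^3$ induced by $A\in SL_3(\mathbb{Z})$ one has $\Lambda(h^{-1})=\det(I-A^{-1})=\det(A-I)=-\Lambda(h)$. This is nonzero whenever $1$ is not an eigenvalue of $A$, e.g.\ for the companion matrix of $x^3-x^2-1$.

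For monodromies the equality does hold (for instance by the monodromy theorem: eigenvalues are roots of unity and traces are integers). But you need none of this here. Because $V(f)$ is smooth, $B_\epsilon\setminus V(f)\simeq S^1$. So $\varphi$, together with the usual nested-balls argument, gives $B_\epsilon\setminus V(g)\simeq S^1$. The surjection $\pi_1\cong\mathbb{Z}\to\mathbb{Z}$ induced by $g/|g|$ is then an isomorphism. Your covering-space description therefore identifies $F_g$, up to homotopy, with the universal cover, so $F_g$ is contractible. Hence $h_g$ and $h_g^{-1}$ both act trivially on $H_*(F_g)=H_0(F_g)$, and $\Lambda(h_g^{\pm1})=1$ whatever the orientation.

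The same observation is what makes working ``up to sign'' legitimate. With several irreducible components, a homeomorphism can reverse the meridians of some components but not others, e.g.\ $(x,y)\mapsto(x,\bar y)$ on $V(xy)$. In that case no single global sign exists; it is only because $V(f)$ is smooth, hence irreducible, that one sign suffices.

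Two smaller points.

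Step 3 requires $f$ to be reduced. For $f=z_1^2$ one has $m(V(f),0)=1$, but the Milnor fibre has two contractible components interchanged by the monodromy, so $\Lambda(h_f)=0$. Begin by replacing $f$ and $g$ with reduced equations; this changes neither the hypothesis nor the conclusion. Reducedness of $g$ is also what makes $F_g$ connected in Step 1.

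In Step 2, it is false that every exceptional divisor lies over $0$ when $V(f)$ has a positive-dimensional singular locus. What you need, and what is true, is that every component contained in $\pi^{-1}(0)$ has $N_i\ge m$, because $\pi$ factors through the blow-up of the origin. Only those components enter A'Campo's formula for the monodromy at $0$.
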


The next result was proved by A'Campo in \cite{ACampo:1975} (see also \cite{Karras:1986}). \begin{theorem}\label{thm:acampo_complement}
Let $f\colon(\mathbb{C}^n,0)\to (\mathbb{C},0)$ be a complex analytic function. Then
$$
m(V(f),0)\leq \inf \{k>0;\Lambda(h_f^k)\not=0\}.
$$
Moreover, if $\chi(\C P^{n-1}\setminus PV(in(f)))\not=0$, then
$$
m(V(f),0)=\inf \{k>0;\Lambda(h_f^k)\not=0\}.
$$
\end{theorem}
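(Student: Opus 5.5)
The plan is to compute the Lefschetz numbers through A'Campo's formula in terms of an embedded resolution. Let $\pi\colon X\to (\C^n,0)$ be an embedded resolution of $V(f)$, meaning the total transform $(f\circ\pi)^{-1}(0)$ is a simple normal crossing divisor. Write $E_1,\dots,E_r$ for the irreducible components of $\pi^{-1}(0)$ and $m_i$ for the order of $f\circ\pi$ along $E_i$. Let $S_i$ be the set of points of $E_i$ lying on no other component of the total transform, strict transform included. A'Campo's formula reads
$$\Lambda(h_f^k)=\sum_{m_i\mid k} m_i\,\chi(S_i).$$
Both assertions will be read off from this formula, using the multiplicities of a well-chosen resolution.

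For the inequality, note that $f\in\mathfrak m^{m}$ with $m=m(V(f),0)=\text{ord}_0 f$. The pullback of every coordinate function vanishes on every component of $\pi^{-1}(0)$. Hence $f\circ\pi$ vanishes to order at least $m$ along each $E_i$, that is, $m_i\ge m$ for all $i$. If $0<k<m$, no $m_i$ divides $k$, so the sum is empty and $\Lambda(h_f^k)=0$. This gives $m\le \inf\{k>0;\Lambda(h_f^k)\ne0\}$; the same argument works for any resolution.

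For the equality I would build the resolution in two stages.
\begin{itemize}
\item First blow up the origin. The exceptional divisor is $E_1\cong\C P^{n-1}$, and in a chart $f\circ\pi=y^m\,(in(f)(x')+y\,(\dots))$.
\item Off $PV(in(f))$ the second factor is a unit, so the total transform is already normal crossing there, with multiplicity $m$ along $E_1$.
\item Then apply embedded resolution (Hironaka), choosing every further center inside the locus where the total transform fails to be simple normal crossing. Inside $E_1$ this locus is contained in $E_1\cap\widetilde{V(f)}$, which projects into $PV(in(f))$. So $E_1\setminus PV(in(f))$ is never touched and stays equal to $S_1$.
\end{itemize}

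The key step, and the one I expect to need the most care, is to show that every later exceptional divisor $E_j$ ($j\ge2$) has $m_j\ge m+1$. I would prove this by induction on the blow-ups. A center $C$ lies in the non-normal-crossing locus, so it lies either in at least two components of the total transform or in the singular locus of one component. The multiplicity of the new divisor is at least the sum of the orders of $f\circ\pi$ along the components containing $C$. Each exceptional component has multiplicity at least $m$, and at least one exceptional component contains $C$, since $C$ lies over $0$. The strict transform, or a second component, adds at least $1$. The inductive hypothesis then gives $m_j\ge m+1$.

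With this in hand, the only stratum with multiplicity dividing $m$ is $S_1=E_1\setminus PV(in(f))$, which equals $\C P^{n-1}\setminus PV(in(f))$. Therefore
$$\Lambda(h_f^m)=m\,\chi(\C P^{n-1}\setminus PV(in(f)))\neq 0.$$
Combined with the inequality, this gives equality of $m(V(f),0)$ with the infimum.
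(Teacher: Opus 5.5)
Your argument is correct and follows essentially the route the paper relies on. The paper only cites A'Campo for this theorem, but its proof of Proposition~\ref{prop:formula_euler_charac_monodromy} uses exactly your ingredients: a resolution factoring through the blow-up of the origin, A'Campo's trace formula, and the claim that $S_k=\emptyset$ for $k<m$ while $S_m\cong \C P^{n-1}\setminus PV(in(f))$. Your induction showing that the later components over $0$ have multiplicity $\ge m+1$, together with the easy reverse inclusion that points of $\widetilde{E_1}$ over $PV(in(f))$ always lie on another component, supplies what the paper calls ``easy''. One aside needs tightening: ``the same argument works for any resolution'' is not justified as written, since for a general resolution $\pi^{-1}(0)$ need not be a divisor. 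You can phrase the bound pointwise instead ($f\circ\pi\in\mathfrak m_p^m$ for every $p\in\pi^{-1}(0)$, so any point where $f\circ\pi$ is locally $z^d$ has $d\ge m$), or simply use your blow-up-first resolution, where $\pi^{-1}(0)$ is the total transform of the Cartier divisor $E_1$.
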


The following result follows from \cite[Theorem 3.3 and Remark 3.4]{Le:1973a}:

\begin{theorem}\label{thm:monodromy}
Let $f,g : \mathbb C^{n}\to  \mathbb C$ be two reduced holomorphic  functions. If the germs $(V(f), 0)$ and $(V(g),0)$ are ambient  homeomorphic, then the induced maps in homology of the monodromies of $f$ and $g$ at $0$ are conjugate. In particular, for each positive integer $k$, $\Lambda(h_f^k)=\Lambda(h_g^k)$, and $\chi(F_f ) = \chi(F_g )$.
\end{theorem}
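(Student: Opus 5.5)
The statement to prove is the last one displayed: Theorem \ref{thm:monodromy}, attributed to \cite[Theorem 3.3 and Remark 3.4]{Le:1973a}.

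I would first reduce the invariance of the monodromy to the topology of the complement of the hypersurface. Fix a Milnor ball $B_\epsilon$ and set $K_f=S_\epsilon\cap V(f)$. By Milnor's fibration theorem, the map $f/|f|\colon S_\epsilon\setminus K_f\to S^1$ is a locally trivial fibration, and it is fibre-homotopy equivalent to the tube fibration $f\colon B_\epsilon\cap f^{-1}(\partial D_\delta)\to\partial D_\delta$. Its fibre is $F_f$ and its geometric monodromy is $h_f$. The plan is to recover this fibration, up to fibre homotopy equivalence over $S^1$, purely from the pair $(B_\epsilon, B_\epsilon\cap V(f))$, and then transport it through $\varphi$.

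\textbf{Step 1 (the complement determines the fibration).} Let $U=B_\epsilon\setminus V(f)$. By the conic structure theorem, $U$ deformation retracts onto $S_\epsilon\setminus K_f$, so $U$ is homotopy equivalent to the total space $E$ of the Milnor fibration. Since $f$ is reduced, the class $[f/|f|]\in H^1(U;\mathbb Z)$ is the sum of the duals of the meridians of the irreducible components of $V(f)$. I would construct the infinite cyclic cover $\widetilde U\to U$ associated with the map $f/|f|_*\colon \pi_1(U)\to\mathbb Z$. Then $\widetilde U$ is homotopy equivalent to $F_f$. Under this equivalence the deck transformation $t$ corresponds to $h_f$, because $E\simeq F_f\times_{h_f}\mathbb R$ is a mapping torus. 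So the pair $(H_*(\widetilde U), t_*)$ is conjugate to $(H_*(F_f), h_{f*})$.

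\textbf{Step 2 (transport by $\varphi$).} Shrinking radii, $\varphi$ restricts to a homeomorphism between neighbourhoods of the origin minus $V(f)$ and minus $V(g)$. Using the conic structure on both sides and the standard nested-ball argument ($B_{\epsilon'}\subset\varphi^{-1}(B_\eta)\subset B_\epsilon$, where the inclusions of the complements are homotopy equivalences), I obtain a homotopy equivalence $U_f\simeq U_g$. The key point is that $\varphi$ maps a meridian of each branch of $V(f)$ to a meridian of the corresponding branch of $V(g)$, possibly with reversed orientation. This holds because a small loop linking a branch once at a smooth point generates the local $H_1$ of the complement of a codimension-two manifold. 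If some orientations reverse, I would argue as in Lê's remark: the relevant class is the total linking class, and up to a global sign (composition with conjugation, which gives a conjugate monodromy) it is preserved. Hence the homotopy equivalence lifts to an equivalence $\widetilde U_f\simeq\widetilde U_g$ commuting with the deck transformations $t^{\pm1}$. Therefore $h_{f*}$ is conjugate to $h_{g*}$ or to $h_{g*}^{-1}$.

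I expect this orientation bookkeeping to be the main obstacle. For a reduced $f$ with several branches, $\varphi$ could a priori reverse some meridians but not others. I would first try to show, via local orientation of the complex manifold $\mathbb C^n$ and of the smooth parts of $V(f)$, that $\varphi$ preserves or reverses all of them simultaneously. This should follow from its action on $H_{2n}(\mathbb C^n,\mathbb C^n\setminus 0)$ together with linking numbers with $V(f)$, which are determined by $H^1$ of the complement. In any case, $H_*(F_f)$ is finite-dimensional, and conjugacy to either $h$ or $h^{-1}$ suffices for the stated consequences.

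\textbf{Step 3 (consequences).} The Lefschetz number $\Lambda(h^k)=\sum_i(-1)^i\operatorname{tr}(h^k_*|H_i)$ is a conjugacy invariant. Moreover, $\operatorname{tr}(h_*^{-k})=\operatorname{tr}(h_*^{k})$ here, because the monodromy is quasi-unipotent with eigenvalues roots of unity and rational traces (the Monodromy Theorem). Hence $\Lambda(h_f^k)=\Lambda(h_g^k)$ for all $k$. Finally, $\chi(F_f)=\Lambda(h_f^0)$ is the alternating sum of the ranks, which is preserved by the homotopy equivalence $\widetilde U_f\simeq\widetilde U_g$. Therefore $\chi(F_f)=\chi(F_g)$.
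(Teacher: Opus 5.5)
The paper gives no proof of this statement; it only cites Lê [Theorem 3.3 and Remark 3.4]. Your Steps 1 and 3 follow the standard route: the Milnor fibre is the infinite cyclic cover of $U=B_\epsilon\setminus V(f)$ defined by $[f/|f|]=\sum_i\gamma_i^*$. Step 2, however, has a genuine gap exactly where you flagged it, and the fix you propose is false. A homeomorphism of pairs need not preserve all meridians, or reverse all of them. Take $f=g=z_1z_2$ on $\mathbb{C}^n$ ($n\ge 2$) and $\varphi(z)=(\bar z_1,z_2,\dots,z_n)$. Then $\varphi$ reverses the meridian of $\{z_1=0\}$ and preserves that of $\{z_2=0\}$. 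So $\varphi^*[g/|g|]$ is the class of $\bar z_1z_2/|z_1z_2|$, which is neither $[f/|f|]$ nor $-[f/|f|]$. The ambient orientation cannot rule this out. The linking numbers of loops with the components of $V(g)$ are exactly the signs in question, so they cannot force them to agree either. Already the Hopf link in $\mathbb{C}^2$ admits such a map. Composing with complex conjugation flips all signs at once, so it does not help.

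The mixed case is not mere bookkeeping. In general, the infinite cyclic cover of $U_g$ attached to a sign vector $(\epsilon_j)$ is not the Milnor fibre of $g$. Take $g=xy(x-y)$, so $U_g\cong\mathbb{C}^*\times(\mathbb{C}\setminus\{0,1\})$, which is aspherical. The cover for $(1,1,1)$ is the Milnor fibre, whose fundamental group is free of rank $4$ ($\chi=-3$). The cover for $(1,1,-1)$ has fundamental group free of rank $2$ ($\chi=-1$). So your argument only yields $F_f\simeq\widetilde{U_g}^{\psi}$ with $\psi=\sum_j\epsilon_j\gamma_j^*$. To conclude, you need an extra idea that genuinely uses the existence of $\varphi$. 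One option is to show that $\varphi$ can be replaced by a homeomorphism with uniform signs. The other is to characterize the Milnor fibration, up to fibre homotopy, in a way insensitive to these signs. This is exactly the content the paper imports from Lê.

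A smaller point: even with uniform sign $-1$, you only get $h_{f*}$ conjugate to $h_{g*}^{-1}$. That suffices for the Lefschetz numbers, as you note. It also gives conjugacy over $\mathbb{Q}$, because $h_{g*}$ is quasi-unipotent and defined over $\mathbb{Z}$. It does not obviously give the integral conjugacy asserted in the statement.
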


\section{Tangent Milnor fiber conjecture and Zariski multiplicity conjecture}
\subsection{Conjecture \ref{conj:zariski_right} implies Conjecture \ref{conj:Milnor_fiber_initial_form_Euler_char}}
\begin{theorem}\label{thm:zariski_implies_tangent_conj_Euler_char}
Let $f,g\colon(\mathbb{C}^n,0)\to (\mathbb{C},0)$ be two reduced holomorphic function-germs. Assume that there is a homeomorphism $\varphi\colon(\mathbb{C}^n,0)\to (\mathbb{C}^n,0)$ such that $f=g\circ \varphi$. If $m(V(f),0)=m(V(g),0)$, then $in(f)^{-1}(1)$ and $in(g)^{-1}(1)$ have the same Euler characteristic.
\end{theorem}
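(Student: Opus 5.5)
The plan is to reduce the Euler characteristic of $in(f)^{-1}(1)$ to a Lefschetz number of the monodromy of $f$. We then transport that Lefschetz number from $f$ to $g$ using Theorem \ref{thm:monodromy}.

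\textbf{Step 1 (covering).} Let $m=m(V(f),0)=m(V(g),0)$ and $h=in(f)$, a homogeneous polynomial of degree $m$. The map $h^{-1}(1)\to \C P^{n-1}\setminus PV(h)$, $z\mapsto [z]$, is an unramified $m$-sheeted covering. Indeed, over $[z]$ with $h(z)\neq 0$ the fiber is $\{\lambda z;\ \lambda^m h(z)=1\}$, which has exactly $m$ points. This holds whether or not $h$ is reduced. Hence
$$\chi(in(f)^{-1}(1))=m\,\chi(\C P^{n-1}\setminus PV(in(f))),$$
and the same identity holds for $g$ with the same $m$.

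\textbf{Step 2 (A'Campo's formula at level $m$).} Next I show that
$$\Lambda(h_f^m)=m\,\chi(\C P^{n-1}\setminus PV(in(f))).$$
This is the content behind Theorem \ref{thm:acampo_complement}, but I want it unconditionally, without assuming the Euler characteristic is nonzero. Take an embedded resolution $\pi$ of $V(f)$ that begins with the blow-up of the origin, and apply A'Campo's formula
$$\Lambda(h_f^k)=\sum_{m_i\mid k} m_i\,\chi(E_i^\circ\cap \pi^{-1}(0)).$$
The first exceptional divisor $E_1\cong\C P^{n-1}$ has multiplicity $m$. After the resolution, its open stratum is $\C P^{n-1}$ minus the strict transform of $V(f)$ and minus the later centers, but these removals need care. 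Every later exceptional divisor lying over $0$ comes from blowing up a center contained in the total transform of $0$. At any point of $E_1\cap\widetilde{V(f)}$, the pulled-back function has order at least $m+1$, so every later divisor over $0$ has multiplicity greater than $m$. Consequently $E_1$ is the only divisor over $0$ whose multiplicity divides $m$. I then need a cut-and-paste Euler characteristic count to check two things: that $\chi$ of its open stratum equals $\chi(\C P^{n-1}\setminus PV(in f))$, and that the later divisors, having multiplicities greater than $m$, do not contribute. This yields the displayed identity, and in particular $\Lambda(h_f^k)=0$ for $k<m$, consistent with Theorem \ref{thm:acampo_complement}. I expect this step to be the main technical obstacle. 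The cleanest route is to rerun A'Campo's argument by hand: after the first blow-up, points of $E_1$ off the strict transform are where the pullback is locally $u\,y^m$, and points on the strict transform contribute zero to $\Lambda$ of the $m$-th iterate.

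\textbf{Step 3 (invariance).} Since $f=g\circ\varphi$, the homeomorphism $\varphi$ sends $V(f)$ onto $V(g)$. Theorem \ref{thm:monodromy} then gives $\Lambda(h_f^m)=\Lambda(h_g^m)$. Combining Steps 1–3,
$$\chi(in(f)^{-1}(1))=\Lambda(h_f^m)=\Lambda(h_g^m)=\chi(in(g)^{-1}(1)),$$
using the hypothesis $m(V(f),0)=m(V(g),0)$ so that the same iterate $m$ is used on both sides.
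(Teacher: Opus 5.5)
Your proposal is correct and follows essentially the same route as the paper. The paper proves $\Lambda(h_f^m)=\chi(F_{in(f)})$ (Proposition \ref{prop:formula_euler_charac_monodromy}) using the same degree-$m$ covering over $\C P^{n-1}\setminus PV(in(f))$ and A'Campo's formula $\Lambda(h_f^k)=\sum_{d\mid k} d\,\chi(S_d)$ on a resolution factoring through the blow-up of the origin, and then concludes with Theorem \ref{thm:monodromy}. In your Step 2 you should state explicitly that the resolution is chosen to be an isomorphism over the normal-crossing locus. Otherwise a center meeting your $E_1\setminus\widetilde{V(f)}$, where the pullback is $u\,y^m$, would produce a further divisor of multiplicity exactly $m$, and your claim that all later divisors have multiplicity greater than $m$ would fail. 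The paper's Claim \ref{claim:complement_proj_tg_cone} tacitly relies on the same choice of resolution.
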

Indeed, Theorem \ref{thm:zariski_implies_tangent_conj_Euler_char} is a direct consequence of the following result that relates the Lefschetz number of the $m$-th iterate of the monodromy to the Euler characteristic of the Milnor fiber of the initial term $in(f)$.
\begin{proposition}\label{prop:formula_euler_charac_monodromy}
Let $f: (\mathbb{C}^{n}, 0) \to (\mathbb{C}, 0)$ be a reduced holomorphic function-germ and $m = m(V(f), 0)$. Let $h_f$ be the monodromy of the Milnor fiber of $f$, and let $F_{in(f)}$ be the Milnor fiber of the initial homogeneous part of $f$. Then
\[ \Lambda(h_f^m) = \chi(F_{in(f)}). \]
\end{proposition}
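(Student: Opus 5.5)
The plan is to compute both sides through A'Campo's formula for the Lefschetz numbers of the monodromy in terms of an embedded resolution, and to compare them via the projectivized tangent cone. In the notation of the preliminaries, the target identity is
\[ \Lambda(h_f^m)=m\,\chi\bigl(\C P^{n-1}\setminus PV(in(f))\bigr)=\chi(F_{in(f)}). \]

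\textbf{The right-hand equality.} Since $in(f)$ is homogeneous of degree $m$, the Milnor fiber is $F_{in(f)}=in(f)^{-1}(1)$. The map $F_{in(f)}\to \C P^{n-1}\setminus PV(in(f))$, $z\mapsto [z]$, is surjective. Its fibers are exactly the orbits of the free action $z\mapsto \zeta z$ of the $m$-th roots of unity. Hence it is an unramified $m$-sheeted covering, and by multiplicativity of the Euler characteristic under finite coverings $\chi(F_{in(f)})=m\,\chi(\C P^{n-1}\setminus PV(in(f)))$.

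\textbf{The left-hand equality.} Choose an embedded resolution $\pi\colon X\to\C^n$ of $V(f)$ as follows.
\begin{enumerate}
\item First blow up the origin. This produces $E_0\cong\C P^{n-1}$, along which $f\circ\pi$ vanishes to order exactly $m$. The strict transform meets $E_0$ in $PV(in(f))$.
\item Then blow up only smooth centers contained in the locus where the total transform fails to be normal crossings.
\item Over $0$, that locus lies inside $E_0\cap(\text{strict transform})$, i.e.\ over $PV(in(f))$, or inside intersections of exceptional divisors.
\end{enumerate}
A'Campo's formula then gives
\[ \Lambda(h_f^k)=\sum_{m_i\mid k} m_i\,\chi(E_i^\circ), \]
where the sum runs over the exceptional components $E_i\subset\pi^{-1}(0)$, $m_i$ is the multiplicity of $f\circ\pi$ along $E_i$, and $E_i^\circ$ is $E_i$ minus the other components of the total transform.

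\textbf{Main obstacle: the multiplicity count.} I need $m_i>m$ for every $E_i\neq E_0$ over $0$; then only $E_0$ contributes to $\Lambda(h_f^m)$. This follows inductively from the rule for blowing up a smooth center $C$: the new multiplicity is at least the sum of the multiplicities of all components of the total transform containing $C$.
\begin{itemize}
\item Every center over $0$ lies in $E_0$ or in some later $E_j$ with $m_j>m$.
\item Every such center also lies in at least one further component (the strict transform with multiplicity $\ge1$, or another exceptional divisor).
\item Hence the new divisor has multiplicity $\ge m+1$.
\end{itemize}
The care needed is to choose the resolution (e.g.\ Hironaka's, with the origin blown up first) so that no center is contained in a single divisor alone. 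I expect to handle this by invoking the standard fact that the resolution can be taken to be an isomorphism outside the non-normal-crossing locus.

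\textbf{Conclusion.} Since all later blow-ups have centers inside $PV(in(f))\cup\bigcup_{j\ne0}(E_0\cap E_j)$, the open stratum $E_0^\circ$ is identified with $\C P^{n-1}\setminus PV(in(f))$. Thus $\Lambda(h_f^m)=m\,\chi(\C P^{n-1}\setminus PV(in(f)))$, and combining with the covering computation gives $\Lambda(h_f^m)=\chi(F_{in(f)})$. This is consistent with Theorem \ref{thm:acampo_complement}, which is exactly the case where this quantity is nonzero.
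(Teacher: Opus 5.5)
Your proposal is correct and follows essentially the same route as the paper. Both arguments use A'Campo's formula for an embedded resolution that factors through the blow-up of the origin, observe that only the open stratum of $E_0$, identified with $\C P^{n-1}\setminus PV(in(f))$, contributes to $\Lambda(h_f^m)$, and finish with the $m$-sheeted covering $F_{in(f)}\to \C P^{n-1}\setminus PV(in(f))$. Your multiplicity count (every other exceptional divisor over $0$ has multiplicity $>m$, because any center over $0$ lies in $E_0$ together with another component) justifies what the paper only asserts as easy, namely $S_m\cong\C P^{n-1}\setminus PV(in(f))$ and $S_k=\emptyset$ for $k<m$.
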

\begin{proof}

Let $\sigma\colon  X_1 \to \mathbb{C}^{n}$ be the blow-up of the origin, and let $E_0 \cong \mathbb{C}P^{n-1}$ be the resulting exceptional divisor.
Let $\pi: X \to \mathbb{C}^{n}$ be a resolution of singularities extending the blow-up of the origin and where the total transform divisor $D = \pi^{-1}(V(f))$ has normal crossings. The stratum $S_k$ is the set of points $x \in \pi^{-1}(0)$ where the local equation of $D$ is a pure power of the form $z^k = 0$. 
This definition excludes points where the divisor has normal crossings (e.g., $z_1^a z_2^b = 0$). Thus, $S_k$ is locally closed and contains no intersection points of components of the divisor. It is easy to obtain the following:

\begin{claim}\label{claim:complement_proj_tg_cone}
For $m = m(V(f), 0)$, the stratum $S_m$ is diffeomorphic to the complement of the projectivized tangent cone:
\[ S_m \cong \mathbb{C}P^{n-1} \setminus PV(in(f)). \]
Furthermore, $S_k = \emptyset$ for all $k < m$.
\end{claim}

We use the trace formula for the Lefschetz number established in \cite[Theorem 1]{ACampo:1975}:
\[ \Lambda(h_f^k) = \sum_{d | k} d \cdot \chi(S_d). \]
We evaluate this for $k=m$. By Claim \ref{claim:complement_proj_tg_cone}, $S_d = \emptyset$ for all $d < m$. Therefore, the sum reduces to the single term $d=m$:
\begin{equation} \label{eq:trace}
    \Lambda(h_f^m) = m \cdot \chi(S_m).
\end{equation}
From Claim \ref{claim:complement_proj_tg_cone}, we have
\begin{equation} \label{eq:geom_subs}
    \Lambda(h_f^m) = m \cdot \chi(\mathbb{C}P^{n-1} \setminus PV(in(f))).
\end{equation}

Now, consider the Milnor fiber of the homogeneous polynomial $in(f)$:
\[ F_{in(f)} = \{ z \in \mathbb{C}^{n} \mid in(f)(z) = 1 \}. \]
Define the projection $\rho\colon F_{in(f)} \to \mathbb{C}P^{n-1} \setminus PV(in(f))$ by $\rho(z) = [z]$.
For any $[z_0] \in \mathbb{C}P^{n-1} \setminus PV(in(f))$, we have $in(f)(z_0) \neq 0$. The points in the fiber $\rho^{-1}([z_0])$ are vectors $\lambda z_0$ satisfying:
\[ in(f)(\lambda z_0) = \lambda^m \cdot in(f)(z_0) = 1 \implies \lambda^m = \frac{1}{in(f)(z_0)}. \]
This equation has exactly $m$ distinct roots for $\lambda$. Thus, $\rho$ is an unramified covering map of degree $m$.
The Euler characteristic of the total space of a finite covering is the degree times the Euler characteristic of the base:
\[ \chi(F_{in(f)}) = m \cdot \chi(\mathbb{C}P^{n-1} \setminus PV(in(f))). \]
Comparing this with equation (\ref{eq:geom_subs}), we obtain the desired result:
\[ \Lambda(h_f^m) = \chi(F_{in(f)}). \]
\end{proof}

\begin{proof}[Proof of Theorem \ref{thm:zariski_implies_tangent_conj_Euler_char}]
Using that $m=m(V(f),0)=m(V(g),0)$, by Theorem \ref{thm:monodromy}, $ \Lambda(h_f^m)= \Lambda(h_g^m)$. By Proposition \ref{prop:formula_euler_charac_monodromy}, $\Lambda(h_f^m) = \chi(F_{in(f)})$ and $\Lambda(h_g^m) = \chi(F_{in(g)})$. Therefore, $\chi(F_{in(f)})=\chi(F_{in(g)})$.
\end{proof}

\subsection{Conjectures \ref{conj:zariski_right} and \ref{conj:Milnor_fiber_initial_form_Euler_char} are equivalent}

\begin{theorem}\label{thm:zariski_equiv_tangent_conj_Euler_char}
Conjecture \ref{conj:zariski_right} has a positive answer if and only if Conjecture \ref{conj:Milnor_fiber_initial_form_Euler_char} has a positive answer.
\end{theorem}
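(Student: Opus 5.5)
The plan is to prove the two implications separately. For ``Conjecture \ref{conj:zariski_right} $\Rightarrow$ Conjecture \ref{conj:Milnor_fiber_initial_form_Euler_char}'' I would apply Theorem \ref{thm:zariski_implies_tangent_conj_Euler_char} directly: if $f=g\circ\varphi$ then Conjecture \ref{conj:zariski_right} gives equal multiplicities, and the theorem gives $\chi(F_{in(f)})=\chi(F_{in(g)})$. The only issue is that Conjecture \ref{conj:Milnor_fiber_initial_form_Euler_char} is stated for arbitrary, possibly non-reduced, germs, whereas Conjecture \ref{conj:zariski_right} and Proposition \ref{prop:formula_euler_charac_monodromy} are stated for reduced ones. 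I would handle this in two steps. First, note that a right homeomorphism $f=g\circ\varphi$ conjugates the Milnor fibrations directly, so $\Lambda(h_f^k)=\Lambda(h_g^k)$ without any reducedness. Second, check that A'Campo's formula, and hence Proposition \ref{prop:formula_euler_charac_monodromy}, holds with $m={\rm ord}_0 f$ for any $f$. It then remains to show ${\rm ord}_0 f={\rm ord}_0 g$. I expect this reduction to the reduced case to be the delicate point, and I may have to settle for the statement restricted to reduced germs. What I would try first is to write $f=u\prod f_i^{a_i}$, observe that $\varphi$ matches the components of $V(f)$ and $V(g)$ together with their generic vanishing orders $a_i$, and then reduce to the reduced case.

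For the converse, assume Conjecture \ref{conj:Milnor_fiber_initial_form_Euler_char}. Let $f,g$ be reduced with $f=g\circ\varphi$, and suppose for contradiction that $m_f:=m(V(f),0)<m_g:=m(V(g),0)$. By Theorem \ref{thm:monodromy}, or directly from the right equivalence, $\Lambda(h_f^k)=\Lambda(h_g^k)$ for all $k$. Theorem \ref{thm:acampo_complement} gives $\Lambda(h_g^{k})=0$ for $k<m_g$, so $\Lambda(h_f^{m_f})=0$. Proposition \ref{prop:formula_euler_charac_monodromy} then yields $\chi(F_{in(f)})=0$, and Conjecture \ref{conj:Milnor_fiber_initial_form_Euler_char} forces $\chi(F_{in(g)})=0$ as well.

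To obtain a contradiction I would stabilize with one extra variable of degree $m_g$. Set $\tilde f(x,y)=f(x)+y^{m_g}$ and $\tilde g(x,y)=g(x)+y^{m_g}$. Then $\tilde f=\tilde g\circ(\varphi\times{\rm id})$, so Conjecture \ref{conj:Milnor_fiber_initial_form_Euler_char} applies to this pair. Since $m_f<m_g$, we have $in(\tilde f)=in(f)$, viewed in $\C^{n+1}$, so $F_{in(\tilde f)}=F_{in(f)}\times\C$ and $\chi(F_{in(\tilde f)})=0$. On the other hand, $in(\tilde g)=in(g)+y^{m_g}$ is a Thom--Sebastiani sum of homogeneous polynomials of the same degree. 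Its Milnor fiber is therefore homotopy equivalent to the join $F_{in(g)}*F_{y^{m_g}}$, whose reduced Euler characteristic is
\[\tilde\chi=-\tilde\chi(F_{in(g)})\,\tilde\chi(\{m_g \text{ points}\})=-(-1)(m_g-1)=m_g-1.\]
Hence $\chi(F_{in(\tilde g)})=m_g\neq0$, which contradicts Conjecture \ref{conj:Milnor_fiber_initial_form_Euler_char}. The case $m_f>m_g$ is symmetric.

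In this converse direction, the main verification is the Thom--Sebastiani/join formula for the global Milnor fiber of a sum of homogeneous polynomials of equal degree. I would cite it as the standard join homotopy equivalence for homogeneous (weighted) Thom--Sebastiani sums. Alternatively, one can compute $\chi$ through the covering formula from the proof of Proposition \ref{prop:formula_euler_charac_monodromy}, counting $\chi(\C P^{n}\setminus PV(in(g)+y^{m_g}))$ by additivity.
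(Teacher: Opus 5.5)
Your converse is essentially the paper's proof. The paper also stabilizes by $t^{k}$ with $k=m(V(g),0)$ and uses $F_{in(\tilde f)}=F_{in(f)}\times\C$. It then gets $\chi(F_{in(\tilde g)})=k$ from the join $F_{in(g)}\ast F_k$ via $\chi(X\ast Y)=1-(\chi(X)-1)(\chi(Y)-1)$, which is your formula $\tilde\chi(X\ast Y)=-\tilde\chi(X)\tilde\chi(Y)$.

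The only difference is bookkeeping. The paper first splits into two cases: $\chi(F_{in(f)})=\chi(F_{in(g)})\neq 0$, settled by the equality clause of Theorem \ref{thm:acampo_complement}, and $\chi=0$, settled by stabilization. You instead show directly that $m_f<m_g$ forces $\chi(F_{in(f)})=\Lambda(h_f^{m_f})=\Lambda(h_g^{m_f})=0$, using only the inequality in Theorem \ref{thm:acampo_complement} and Proposition \ref{prop:formula_euler_charac_monodromy}. This is the contrapositive of the paper's first case and is slightly tidier. Your alternative count via the covering formula and additivity also works: the part with $y=0$ contributes $0$ and the chart $y=1$ contributes $1$. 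It has the advantage of avoiding the join homotopy equivalence, which the paper quotes without proof.

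In the forward direction you do what the paper does. The paper, too, only invokes Theorem \ref{thm:zariski_implies_tangent_conj_Euler_char}, and so tacitly treats Conjecture \ref{conj:Milnor_fiber_initial_form_Euler_char} for reduced germs. Your fallback is therefore at the paper's level.

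However, the reduction you sketch via $f=u\prod f_i^{a_i}$ would not work as stated. Matching components and vanishing orders only tells you that $\varphi$ carries $V(f_{\rm red})$ onto $V(g_{\rm red})$; it does not give $f_{\rm red}=g_{\rm red}\circ\varphi$. Since Conjecture \ref{conj:zariski_right} is about right equivalence, it cannot be applied to $f_{\rm red}, g_{\rm red}$ or to the individual components.

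The clean fix is to reuse your own stabilization. Take $N>\max\{{\rm ord}_0f,{\rm ord}_0g\}$. Then:
\begin{itemize}
\item the germs $f+y^N$ and $g+y^N$ are reduced, because the $y$-discriminant of $y^N+f$ is a nonzero constant times $f^{N-1}$;
\item they satisfy $f+y^N=(g+y^N)\circ(\varphi\times{\rm id})$;
\item their initial forms are $in(f)$ and $in(g)$.
\end{itemize}
Hence Conjecture \ref{conj:zariski_right} together with Theorem \ref{thm:zariski_implies_tangent_conj_Euler_char}, applied to this pair, yields $\chi(F_{in(f)})=\chi(F_{in(g)})$ for arbitrary nonzero germs. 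This requires no extension of A'Campo's formula to non-reduced $f$.
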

\begin{proof}
By Theorem \ref{thm:zariski_implies_tangent_conj_Euler_char}, we only need to prove that if Conjecture \ref{conj:Milnor_fiber_initial_form_Euler_char} has a positive answer, then Conjecture \ref{conj:zariski_right} also has a positive answer.

So, assume that Conjecture \ref{conj:Milnor_fiber_initial_form_Euler_char} has a positive answer. 

Let $f,g\colon(\mathbb{C}^n,0)\to (\mathbb{C},0)$ be two reduced holomorphic function-germs. Assume that there is a homeomorphism $\varphi\colon(\mathbb{C}^n,0)\to (\mathbb{C}^n,0)$ such that $f=g\circ \varphi$. We have to prove that $m(V(f),0)=m(V(g),0)$. 

Since Conjecture \ref{conj:Milnor_fiber_initial_form_Euler_char} has a positive answer, $\chi(F_{in(f)})=\chi(F_{in(g)})$. We have two cases to consider.

\noindent $\bullet$ {\bf Case $\chi(F_{in(f)})=\chi(F_{in(g)})\not =0$.} 

This implies $\chi(\C P^{n-1}\setminus PV(in(f)))\not=0$ and  $\chi(\C P^{n-1}\setminus PV(in(g)))\not=0$. By Theorem \ref{thm:acampo_complement},
$$
m(V(f),0)=\inf \{j>0;\Lambda(h_f^j)\not=0\}
$$
and 
$$
m(V(g),0)=\inf \{j>0;\Lambda(h_g^j)\not=0\}.
$$
By Theorem \ref{thm:monodromy}, $\Lambda(h_g^j)=\Lambda(h_f^j)$ for all positive integers $j$. Therefore, $m(V(f),0)=m(V(g),0)$.

\noindent $\bullet$ {\bf Case $\chi(F_{in(f)})=\chi(F_{in(g)})=0$.} 

For simplicity, let $m=m(V(f),0)$ and $k=m(V(g),0)$.
Assume by contradiction that $m\not= k$. We may assume that $m<k$. Since $m\geq 2$, we have that $k\geq 3$. 

Let us define $\tilde f, \tilde g\colon (\mathbb{C}^{n}\times\mathbb{C},0)\to (\mathbb{C},0)$ by
$\tilde f(x,t)=f(x)+t^k$ and $\tilde g(x,t)=g(x)+t^k$. Then, the homeomorphism $\tilde\varphi\colon(\mathbb{C}^n\times \C,0)\to (\mathbb{C}^n\times \C,0)$ given by $\tilde \varphi(x,t)=(\varphi(x),t)$ satisfies $\tilde f=\tilde g\circ \tilde \varphi$. 
Since we are assuming that Conjecture \ref{conj:Milnor_fiber_initial_form_Euler_char} has a positive answer, we have $\chi(F_{in(\tilde f)})=\chi(F_{in(\tilde g)})$. However, $in(\tilde f)(x,t)=in(f)(x)$ and  $in(\tilde g)(x,t)=in(g)(x)+t^k$. Then, $F_{in(\tilde f)}=F_{in(f)}\times \C$, and $F_{in(\tilde g)}$ is homotopy equivalent to the join of $F_{in(g)}$ and $F_k:=\{t\in \C; t^k=1\}$, denoted here by $F_{in(g)}\ast F_k$. Then, $\chi(F_{in(\tilde f)})=\chi(F_{in(f)})=0$. 

To calculate $\chi(F_{in(\tilde g)})$ we need the following result, which may be well-known, but we were unable to find a proper reference for it. Thus, we decided to present a proof of it.
\begin{lemma}
For any finite CW-complexes $X$ and $Y$, the Euler characteristic of their join satisfies:
\[
\chi(X * Y) =1-(\chi(X)-1)(\chi(Y)-1).
\]
\end{lemma}

\begin{proof}

It is a standard result that the join $X * Y$ is homotopy equivalent to the suspension of the smash product:
\begin{equation}\label{eq:relation_join-smash-suspension}
    X * Y \simeq \Sigma(X \wedge Y),
\end{equation}
where $X \wedge Y$ denotes the smash product and $\Sigma(Z)$ denotes the unreduced suspension of the topological space $Z$.
Recall that by definition, the smash product is the quotient space $X \wedge Y = (X \times Y) / (X \vee Y)$, where $X \vee Y$ denotes the wedge of $X$ and $Y$.

For finite CW-complexes, the Euler characteristic of a quotient pair $(A, B)$ satisfies $\chi(A/B) = \chi(A) - \chi(B) + 1$. Applying this property, we obtain:
\[
\chi(X \wedge Y) = \chi(X \times Y) - \chi(X \vee Y) + 1.
\]
Using that $\chi(X \times Y) = \chi(X)\chi(Y)$ and the additive property for wedge sums $\chi(X \vee Y) = \chi(X) + \chi(Y) - 1$, we substitute to find:
\begin{align*}
\chi(X \wedge Y) &= \chi(X)\chi(Y) - \big(\chi(X) + \chi(Y) - 1\big) + 1 \\
&= \chi(X)\chi(Y) - \chi(X) - \chi(Y) + 2.
\end{align*}
Therefore,
\begin{equation}\label{eq:smash_prod_formula}
\chi(X \wedge Y) = 1+(\chi(X)-1)(\chi(Y)-1).    
\end{equation}

Taking the unreduced suspension of a space shifts its reduced homology groups up by exactly one degree, meaning $\tilde{H}_{k}(\Sigma (Z)) \cong \tilde{H}_{k-1}(Z)$ for all $k$. Because the Euler characteristic is the alternating sum of the ranks of these homology groups, shifting the degrees by one flips the sign of the entire sum. Thus, for any space $Z$:
\[
\chi(\Sigma (Z)) -1= 1-\chi(Z).
\]
Combining this sign reversal with the homotopy equivalence $X * Y \simeq \Sigma(X \wedge Y)$ and Equation (\ref{eq:smash_prod_formula}), we conclude:
\begin{align*}
\chi(X * Y)-1 & = \chi\big(\Sigma(X \wedge Y)\big) -1\\
              & = 1 -\chi(X \wedge Y) \\
              & = 1-(1+(\chi(X)-1)(\chi(Y)-1))
\end{align*}
Therefore
\[
\chi(X * Y) =1-(\chi(X)-1)(\chi(Y)-1).
\]
\end{proof}
Then, 
$$
0=\chi(F_{in(\tilde f)})=\chi(F_{in(\tilde g)})=1-(\chi(F_{in(g)})-1)(\chi(F_k)-1)=\chi(F_k)=k\not =0,
$$
which is a contradiction. 

Therefore, we must have $m=k$.
\end{proof}

\subsection{Embedded topological invariance of the multiplicity of generalized semi-Brieskorn-Pham's hypersurfaces}
\begin{proposition}\label{prop:semi_top_mult}
Let $f,g\colon (\C^n,0)\to (\C,0)$ be two reduced holomorphic functions. Assume that $f$ and $g$ are generalized semi-Brieskorn-Pham functions and that there is a homeomorphism $\phi\colon (\C^n,0)\to (\C^n,0)$ such that $\phi(V(f))=V(g)$. If $m(V(f),0),m(V(g),0)\not=2$, then $m(V(f),0)=m(V(g),0)$ and $m_1(f)=m_1(g)$.
\end{proposition}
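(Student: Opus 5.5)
We plan to use Theorem \ref{thm:acampo_complement} together with Theorem \ref{thm:monodromy}. The multiplicity is then read off from the Lefschetz numbers of the iterated monodromy, which are embedded topological invariants. The key point is to check that for a generalized semi-Brieskorn-Pham function $f$ with $m(V(f),0)\neq 2$, the Euler characteristic $\chi(\C P^{n-1}\setminus PV(in(f)))$ is nonzero. We also want to compute $\Lambda(h_f^{m})$ explicitly in terms of $m_1(f)$.

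First we reduce via the holomorphic change of coordinates $\phi$, which preserves multiplicity and the embedded topological type. So we may assume $f=z_1^{a_1}+\dots+z_k^{a_k}+h$ with ${\rm ord}_0 h>a_k$.

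If $a_1=1$, then $m(V(f),0)=1$, and A'Campo--L\^e's Theorem forces $m(V(g),0)=1$. In that case $m_1(f)=m_1(g)=1$ by definition, since $a_1=1$ for $g$ as well.

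Now suppose $a_1\ge 2$. Then $a_1=m:=m(V(f),0)\ge 3$, and with $p=m_1(f)$ we get $in(f)=z_1^{m}+\dots+z_p^{m}+(\text{terms of } h \text{ of degree } m)$. Since ${\rm ord}_0h>a_k\ge m$, $h$ has no degree-$m$ terms, so $in(f)=z_1^m+\dots+z_p^m$.

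Next we compute $\chi(F_{in(f)})$. The fiber $F_{in(f)}=\{z_1^m+\dots+z_p^m=1\}\times\C^{n-p}$ is homotopy equivalent to the Milnor fiber of the Brieskorn-Pham polynomial in $p$ variables. That fiber is a bouquet of $(m-1)^p$ spheres of dimension $p-1$, so
\[
\chi(F_{in(f)})=1+(-1)^{p-1}(m-1)^p .
\]
This is nonzero as soon as $m\ge 3$, because then $(m-1)^p\ge 2$. (When $m=2$ and $p$ is even it vanishes, which is why $m=2$ is excluded.) The covering argument in the proof of Proposition \ref{prop:formula_euler_charac_monodromy} then gives $\chi(\C P^{n-1}\setminus PV(in(f)))=\chi(F_{in(f)})/m\neq 0$. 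Hence Theorem \ref{thm:acampo_complement} yields $m(V(f),0)=\inf\{j>0;\Lambda(h_f^j)\neq 0\}$, and the same holds for $g$. Theorem \ref{thm:monodromy} gives $\Lambda(h_f^j)=\Lambda(h_g^j)$ for all $j$, so $m(V(f),0)=m(V(g),0)=m$.

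For $m_1$, Proposition \ref{prop:formula_euler_charac_monodromy} and Theorem \ref{thm:monodromy} give
\[
1+(-1)^{p-1}(m-1)^p=\Lambda(h_f^m)=\Lambda(h_g^m)=1+(-1)^{q-1}(m-1)^q ,
\]
where $q=m_1(g)$. Since $m-1\ge 2$, the map $p\mapsto(-1)^{p-1}(m-1)^p$ is injective: the absolute values $(m-1)^p$ are strictly increasing in $p$. Hence $p=q$.

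The main obstacle is the mixed case, where one multiplicity equals $1$ and the other is at least $3$. A'Campo--L\^e handles it, but we must also make sure that $a_1>1$ really gives $m=a_1$ with initial form exactly the Brieskorn-Pham sum. That is where the condition ${\rm ord}_0h>a_k$ is used.
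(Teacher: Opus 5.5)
Your proposal is correct and follows essentially the paper's own argument. You reduce to $in(f)=z_1^m+\dots+z_p^m$, use Proposition~\ref{prop:formula_euler_charac_monodromy} to get $\Lambda(h_f^m)=\chi(F_{in(f)})=1+(-1)^{p-1}(m-1)^p\neq 0$, deduce $m(V(f),0)=m(V(g),0)$ from Theorems~\ref{thm:acampo_complement} and~\ref{thm:monodromy}, and obtain $m_1(f)=m_1(g)$ by comparing $\Lambda(h_f^m)$ with $\Lambda(h_g^m)$. The differences are only cosmetic: you use the equality case of Theorem~\ref{thm:acampo_complement} via the covering rather than the inequality, you unify the $p=1$ and $p>1$ formulas, and you treat multiplicity one explicitly via A'Campo--L\^e, a case the paper's ``since $a,b>2$'' passes over.
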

\begin{proof}
After a change of coordinates, if necessary, we may assume that $f$ and $g$ are written in the following way:
$$
f(z_1,...,z_r,z_{r+1},...,z_n)=z_1^{a}+...+z_r^{a}+h(z_{1},...,z_n),
$$
and 
$$
g(z_1,...,z_s,z_{s+1},...,z_n)=z_1^{b}+...+z_s^{b}+\tilde h(z_{1},...,z_n),
$$
where $h,\tilde h\colon (\C^{n},0)\to (\C,0)$ are holomorphic functions such that ${\rm ord}_0 h>a$ and ${\rm ord}_0 \tilde h>b$. In this case, $in(f)(z)= z_1^{a}+...+z_r^{a}$ and $in(g)(z)= z_1^{b}+...+z_s^{b}$, where $r=m_1(f)$ and $r=m_1(g)$. 

Note that if $r>1$, then $\chi(F_{in(f)})=1+(-1)^{r-1}(a-1)^r$, and if $r=1$, then $\chi(F_{in(f)})=a>0$. By the same reasoning, if $s>1$, then $\chi(F_{in(g)})=1+(-1)^{s-1}(b-1)^s$, and if $s=1$, then $\chi(F_{in(g)})=b>0$.

From Proposition \ref{prop:formula_euler_charac_monodromy}, $\Lambda(h_f^a)=\chi(F_{in(f)})$ and $\Lambda(h_g^b)=\chi(F_{in(g)})$. 
Since $a,b>2$, we have that $\Lambda(h_f^a)\not=0$ and $\Lambda(h_g^b)\not =0$. Therefore, by Theorem \ref{thm:acampo_complement}, $a=b$, and $\chi(F_{in(f)})=\chi(F_{in(g)})$. Thus, if $r,s>1$, then 
$$
1+(-1)^{r-1}(a-1)^r=1+(-1)^{s-1}(a-1)^s,
$$
which forces $r=s$.

Since $a>2$, $m=1$ is the unique integer solution for the following equation
$$
a=1+(-1)^{m-1}(a-1)^m.
$$
Thus, $r=1$ if and only if $s=1$.
\end{proof}

Since the multiplicity equal to 2 is an embedded topological invariant in $\C^3$ (see \cite{Aznar:1980} or \cite{FernandesJS:2025}), we obtain the following:

\begin{corollary}\label{cor:inv_mult_gsBP_surfaces}
Let $f,g\colon (\C^3,0)\to (\C,0)$ be two reduced holomorphic functions. Assume that $f$ and $g$ are generalized semi-Brieskorn-Pham functions and that there is a homeomorphism $\phi\colon (\C^3,0)\to (\C^3,0)$ such that $\phi(V(f))=V(g)$. Then $m(V(f),0)=m(V(g),0)$.
\end{corollary}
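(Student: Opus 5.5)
The plan is to split according to whether the multiplicities equal $2$, and to apply Proposition \ref{prop:semi_top_mult} whenever neither does. Write $m_f=m(V(f),0)$ and $m_g=m(V(g),0)$. If $m_f\neq 2$ and $m_g\neq 2$, Proposition \ref{prop:semi_top_mult} gives $m_f=m_g$ directly, with no restriction on $n$.

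Next I would handle the case where one multiplicity equals $2$, say $m_f=2$ (the other case is symmetric, using $\phi^{-1}$). Here I invoke the embedded topological invariance of multiplicity $2$ for hypersurfaces in $\C^3$, proved in \cite{Aznar:1980} and \cite{FernandesJS:2025}. Since $\phi$ is a homeomorphism of $(\C^3,0)$ carrying $V(f)$ onto $V(g)$, this result gives $m_g=2=m_f$.

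The one point to check is that the invariance result covers every competing value of $m_g$. If $m_g=1$, then A'Campo--L\^e's Theorem applied to $\phi^{-1}$ forces $m_f=1$, contradicting $m_f=2$. So the comparison with smooth germs is automatic, and the cited result only needs to separate multiplicity $2$ from multiplicities $\geq 3$. That is exactly what ``multiplicity $2$ is a topological invariant'' means, so no extra argument is needed.

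I expect the main obstacle to be nothing beyond this bookkeeping, since the heavy input is the cited three-variable result. The only care required is to state that result in its embedded form, for reduced germs in $\C^3$ and possibly non-isolated singularities, so that it applies to arbitrary generalized semi-Brieskorn-Pham surfaces.
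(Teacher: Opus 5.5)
Your proposal is correct and matches the paper's argument: the corollary follows by combining Proposition \ref{prop:semi_top_mult} (when neither multiplicity equals $2$) with the embedded topological invariance of multiplicity $2$ in $\C^3$ from \cite{Aznar:1980} or \cite{FernandesJS:2025}. Your extra appeal to A'Campo--L\^e's Theorem to rule out $m_g=1$ is harmless but not needed, since the cited invariance result already covers every competing value of $m_g$.
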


As another consequence, we obtain the following result, which was proved by Mendris and N\'emethi \cite[Corollary 2]{MendrisN:2005}.
\begin{corollary}\label{cor:inv_mult_Mendris-Nemethi_surfaces}
Let $f,g\colon (\C^2,0)\to (\C,0)$ be two irreducible holomorphic functions. Let $\tilde f,\tilde g\colon (\C^3,0)\to (\C,0)$ be the functions given by $\tilde f(x,y,z)=x^m+f(y,z)$ and $\tilde g(x,y,z)=x^k+g(y,z)$. Assume that there is a homeomorphism $\phi\colon (\C^3,0)\to (\C^3,0)$ such that $\phi(V(\tilde f))=V(\tilde g)$. Then $m(V(\tilde f),0)=m(V(\tilde g),0)$.
\end{corollary}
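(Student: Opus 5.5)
The plan is to reduce the corollary to Corollary \ref{cor:inv_mult_gsBP_surfaces}. That requires two things: that $\tilde f$ and $\tilde g$ are reduced, and that they are generalized semi-Brieskorn-Pham functions.

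For the second point, item b) of Example \ref{exam:linear_cone} says every irreducible holomorphic germ $f\colon(\C^2,0)\to(\C,0)$ is generalized semi-Brieskorn-Pham. I would justify this directly rather than just cite it. The tangent cone of an irreducible plane curve germ is a single line, so $in(f)=\ell^{a}$ for a linear form $\ell$ and $a=m(V(f),0)$. After a linear change of coordinates we may take $\ell=y$, so $f=y^{a}+(\text{terms of order}>a)$, which is the required form with $k=1$ and $a_1=a$. (If $a=1$, the implicit function theorem gives the form directly.)

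Next, item c) of Example \ref{exam:linear_cone} applies. We have
\[
f\circ\phi=z_1^{a_1}+\dots+z_j^{a_j}+h ,\qquad {\rm ord}_0h>a_j .
\]
Set $\tilde f(x,y,z)=x^m+f(y,z)$ and reorder the exponents $m,a_1,\dots,a_j$ increasingly. The only point needing care is the condition ${\rm ord}_0 h>\max$ of the exponents, which can fail when $m$ becomes the maximum. I would handle this by cases on the size of $m$:
\begin{itemize}
\item If $m\le a$, the maximum exponent is $a$ and ${\rm ord}_0h>a$ still holds.
\item If $m>a$, then $in(\tilde f)=y^{a}$. Its tangent cone is a linear subspace, so item a) applies.
\end{itemize}
In either case $\tilde f$, and likewise $\tilde g$, is generalized semi-Brieskorn-Pham.

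For reducedness, $\partial\tilde f/\partial x=mx^{m-1}$ and $\tilde f$ is monic in $x$ over $\C\{y,z\}$. A repeated factor of $\tilde f$ would force $f$ to be a perfect power, up to the structure of $x^m+f$. Since $f$ is irreducible and reduced, and $x^m+f(y,z)$ is a Weierstrass polynomial with discriminant a unit multiple of $f^{m-1}\ne 0$, the polynomial $\tilde f$ is squarefree. Hence $\tilde f$ and $\tilde g$ are reduced.

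With both hypotheses checked, Corollary \ref{cor:inv_mult_gsBP_surfaces} applied to $\tilde f,\tilde g$ and $\phi$ gives $m(V(\tilde f),0)=m(V(\tilde g),0)$. That corollary in turn rests on Proposition \ref{prop:semi_top_mult} together with Aznar's invariance of multiplicity $2$ in $\C^3$.

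The main obstacle is the bookkeeping for the order condition when $m$ is large compared with $a$, which is why I split into the two cases on $m$ above. I expect the rest to be routine.
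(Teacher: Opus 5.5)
Your proposal is correct and follows the paper's route: the paper gets this corollary by observing, via items b) and c) of Example \ref{exam:linear_cone}, that $\tilde f,\tilde g$ are generalized semi-Brieskorn-Pham, and then invoking Corollary \ref{cor:inv_mult_gsBP_surfaces}. Your case split $m\le a$ versus $m>a$ (in the second case absorbing $x^m$ into the higher-order term) and your discriminant check that $\tilde f,\tilde g$ are reduced make explicit details that the paper leaves implicit.
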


\begin{proposition}
Let $f,g\colon (\C^n,0)\to (\C,0)$ be two reduced holomorphic functions such that $C(V(f),0)$ is a linear subspace. Assume that there is a homeomorphism $\phi\colon (\C^n,0)\to (\C^n,0)$ such that $\phi(V(f))=V(g)$. Then $m(V(f),0)\geq m(V(g),0)$.
\end{proposition}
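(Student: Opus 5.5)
Let $m=m(V(f),0)$ and $k=m(V(g),0)$. The plan is to combine the two parts of Theorem~\ref{thm:acampo_complement}: the equality part applied to $f$, and the inequality part applied to $g$. Theorem~\ref{thm:monodromy} transports Lefschetz numbers from $f$ to $g$.

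First I would note the case $m=1$. If $m=1$, the A'Campo--L\^e Theorem gives $k=1$, since the homeomorphism $\phi$ can be inverted. So I assume $m\geq 2$. Since $C(V(f),0)$ is a linear subspace, $V(in(f))$ is a linear subspace of $\C^n$. This subspace must be a hyperplane $L$, so $in(f)=\ell^m$ for a linear form $\ell$ (up to a unit constant). Then $PV(in(f))$ is a hyperplane $\C P^{n-2}\subset\C P^{n-1}$, and its complement is $\C^{n-1}$, with $\chi(\C P^{n-1}\setminus PV(in(f)))=1\neq 0$. Equivalently, $F_{in(f)}=\{\ell^m=1\}$ is $m$ disjoint copies of $\C^{n-1}$, so $\chi(F_{in(f)})=m$, which agrees with Proposition~\ref{prop:formula_euler_charac_monodromy}.

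By the equality part of Theorem~\ref{thm:acampo_complement} for $f$,
$$m=\inf\{j>0;\Lambda(h_f^j)\neq 0\},$$
and in particular $\Lambda(h_f^m)=m\neq0$. By Theorem~\ref{thm:monodromy} we have $\Lambda(h_g^m)=\Lambda(h_f^m)\neq 0$. Theorem~\ref{thm:monodromy} requires $f,g$ reduced and an ambient homeomorphism, both of which we have. The inequality part of Theorem~\ref{thm:acampo_complement} for $g$ then yields
$$k\leq \inf\{j>0;\Lambda(h_g^j)\neq 0\}\leq m.$$
This is the desired inequality $m(V(f),0)\geq m(V(g),0)$.

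The only delicate point is justifying that the tangent cone is a hyperplane, i.e. that $in(f)$ is a power of a linear form. The tangent cone of a hypersurface is the zero set of $in(f)$, which has pure dimension $n-1$. If that zero set is a linear subspace, it is a hyperplane $\{\ell=0\}$. By the Nullstellensatz, $in(f)=c\,\ell^m$. Everything else is a direct application of previously stated results.
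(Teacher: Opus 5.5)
Your argument is correct and is essentially the paper's proof. The paper likewise reduces to $in(f)=z_1^{a}$ (via $m_1(f)=1$), obtains $\Lambda(h_f^{a})=\chi(F_{in(f)})=a\neq 0$ from Proposition~\ref{prop:formula_euler_charac_monodromy}, transfers this to $g$ by Theorem~\ref{thm:monodromy}, and concludes with the inequality part of Theorem~\ref{thm:acampo_complement}; your separate A'Campo--L\^e treatment of $m=1$ is harmless but unnecessary, since the same computation gives $\Lambda(h_f)=1\neq 0$ in that case.
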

\begin{proof}
Since $C(V(f),0)$ is a linear subspace, $m_1(f)=1$. Thus, after a change of coordinates, if necessary, we may assume that $f$ is written in the following way:
$$
f(z_1,...,z_n)=z_1^{a}+h(z_{1},...,z_n),
$$
where $h\colon (\C^{n},0)\to (\C,0)$ is a holomorphic function such that ${\rm ord}_0 h>a$. In this case, $in(f)(z)= z_1^{a}$ and $\chi(F_{in(f)})=a>0$.

From Proposition \ref{prop:formula_euler_charac_monodromy}, $\Lambda(h_f^a)=\chi(F_{in(f)})$. Thus, $\Lambda(h_g^a)=\Lambda(h_f^a)=a\not=0$. Therefore, by Theorem \ref{thm:acampo_complement}, $a=m(V(f),0)\geq m(V(g),0)$.
\end{proof}

\begin{corollary}
Let $f,g\colon (\C^n,0)\to (\C,0)$ be two reduced holomorphic functions such that $C(V(f),0)$ and $C(V(g),0)$ are linear subspaces. Assume that there is a homeomorphism $\phi\colon (\C^n,0)\to (\C^n,0)$ such that $\phi(V(f))=V(g)$. Then $m(V(f),0)= m(V(g),0)$.
\end{corollary}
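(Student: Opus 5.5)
The corollary should follow by applying the preceding proposition twice, once in each direction. The hypotheses are symmetric in $f$ and $g$: both tangent cones $C(V(f),0)$ and $C(V(g),0)$ are linear subspaces, and a homeomorphism $\phi$ with $\phi(V(f))=V(g)$ is given. I will therefore run the proposition once for the pair $(f,g)$ and once for the pair $(g,f)$, and compare the two inequalities.

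For the first application I use the proposition directly with the pair $(f,g)$ and the homeomorphism $\phi$. Its hypothesis requires only that $C(V(f),0)$ be linear, which holds. This gives $m(V(f),0)\geq m(V(g),0)$.

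For the reverse inequality I swap the roles of $f$ and $g$ and use $\phi^{-1}\colon(\C^n,0)\to(\C^n,0)$. This map is again a germ of homeomorphism, and it satisfies $\phi^{-1}(V(g))=V(f)$. Since $C(V(g),0)$ is a linear subspace, the proposition applied to the pair $(g,f)$ yields $m(V(g),0)\geq m(V(f),0)$. Combining the two inequalities gives $m(V(f),0)=m(V(g),0)$.

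No step here is a real obstacle. The only point to check is that the proposition's proof uses nothing beyond what is symmetric under this swap. It uses only that $V(f)$ and $V(g)$ are ambient homeomorphic, so that Theorem \ref{thm:monodromy} gives $\Lambda(h_f^j)=\Lambda(h_g^j)$ for all $j$. It also uses the linearity of the tangent cone of the first function, which Proposition \ref{prop:formula_euler_charac_monodromy} turns into $\Lambda(h^a)=a\neq 0$. Both ingredients hold with $f$ and $g$ interchanged, so the second application is valid.
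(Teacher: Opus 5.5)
Your proof is correct and matches the paper's intended argument. The paper states this corollary without proof, as an immediate consequence of the preceding proposition applied once to $(f,g,\phi)$ and once to $(g,f,\phi^{-1})$, which is exactly what you do.
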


\begin{corollary}
Let $f,g\colon (\C^n,0)\to (\C,0)$ be two reduced holomorphic functions such that $C(V(f),0)$ and $C(V(g),0)$ are linear subspaces. Let $a_1,...,a_r,b_1,...,b_r\geq 3$ be integers, and $\tilde f,\tilde g\colon (\C^{n+r},0)\to (\C,0)$ be the functions $\tilde f(x_1,...,x_{n+1})=x_1^{a_1}+...+x_r^{a_r}+f(x_{r+1},...,x_{n+r})$ and $\tilde g(x_1,...,x_{n+r})=x_1^{b_1}+...+x_r^{b_r}+g(x_{r+1},...,x_{n+1})$. Assume that there is a homeomorphism $\phi\colon (\C^{n+1},0)\to (\C^{n+1},0)$ such that $\phi(V(\tilde f))=V(\tilde g)$. Then $m(V(\tilde f),0)=m(V(\tilde g),0)$.
\end{corollary}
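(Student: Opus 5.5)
The plan is to reduce the statement to the preceding Corollary (both tangent cones linear) by checking that $\tilde f$ and $\tilde g$ are generalized semi-Brieskorn-Pham functions of a special shape, and then to run the Lefschetz-number argument of Proposition \ref{prop:semi_top_mult} with Theorem \ref{thm:acampo_complement} and Theorem \ref{thm:monodromy}. (The statement mixes the dimensions $n+1$ and $n+r$; I read it with $n+r$ throughout, so $\phi$ is a homeomorphism of $(\C^{n+r},0)$.)

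\textbf{Step 1: normal form of the initial part.} By the proof of the preceding Proposition we may write $f=z_1^{a}+h$ with ${\rm ord}_0h>a$, and $g=w_1^{b}+\tilde h$ with ${\rm ord}_0\tilde h>b$. We may also assume $a_1\le\dots\le a_r$ and $b_1\le\dots\le b_r$. Then $in(\tilde f)$ is the sum of the monomials of minimal degree $\alpha=\min(a_1,a)$ among $x_1^{a_1},\dots,x_r^{a_r},z_1^{a}$, and similarly for $in(\tilde g)$ with $\beta=\min(b_1,b)$. So $in(\tilde f)$ is a Brieskorn-Pham polynomial $y_1^{\alpha}+\dots+y_p^{\alpha}$ in $p\ge1$ distinct variables.

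\textbf{Step 2: Euler characteristic.} The Milnor fiber of $in(\tilde f)$ is the product of $F_{y_1^\alpha+\dots+y_p^\alpha}$ with a complex affine space. Hence
\[\chi(F_{in(\tilde f)})=1+(-1)^{p-1}(\alpha-1)^p\quad\text{if } p>1,\]
and $\chi(F_{in(\tilde f)})=\alpha$ if $p=1$. The same holds for $\tilde g$ with $q$ and $\beta$.

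\textbf{Step 3: nonvanishing.} This is the point where the hypothesis $a_i,b_i\ge3$ is needed.
\begin{itemize}
\item If $\alpha\ge3$, then $(\alpha-1)^p\ge2$, so $\chi(F_{in(\tilde f)})\ne0$.
\item If $\alpha<3$, then $\alpha=a\in\{1,2\}$ and $a<a_1$. The only variable contributing to $in(\tilde f)$ is then $z_1$, so $p=1$ and $\chi(F_{in(\tilde f)})=\alpha\neq0$.
\end{itemize}
Thus in every case $\chi(\C P^{n+r-1}\setminus PV(in(\tilde f)))\ne0$, and likewise for $\tilde g$. This case analysis is the main obstacle: one must rule out $\alpha=2$ with $p\ge2$, since there $1+(-1)^{p-1}$ vanishes for even $p$, and this is exactly what $a_i\ge3$ excludes.

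\textbf{Step 4: conclusion.} Since $\chi(\C P^{n+r-1}\setminus PV(in(\tilde f)))\ne0$ and the same holds for $\tilde g$, the second part of Theorem \ref{thm:acampo_complement} gives
\[m(V(\tilde f),0)=\inf\{k>0;\ \Lambda(h_{\tilde f}^k)\ne0\},\qquad m(V(\tilde g),0)=\inf\{k>0;\ \Lambda(h_{\tilde g}^k)\ne0\}.\]
By Theorem \ref{thm:monodromy}, $\Lambda(h_{\tilde f}^k)=\Lambda(h_{\tilde g}^k)$ for all $k>0$, since $\phi$ maps $V(\tilde f)$ onto $V(\tilde g)$ and both functions are reduced. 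Therefore the two infima coincide, and $m(V(\tilde f),0)=m(V(\tilde g),0)$.

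Finally, we must check that $\tilde f$ and $\tilde g$ are reduced. If $f$ is reduced then so is $\tilde f$: for instance, $\tilde f$ is a Weierstrass-type polynomial in $x_1$ plus terms not involving $x_1$, and squarefreeness follows in the standard way for such sums of functions in separate variables. I would state this briefly.
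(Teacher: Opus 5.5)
Your proof is correct and follows essentially the argument the paper intends. The paper states this corollary without proof, as a consequence of Proposition \ref{prop:semi_top_mult} and the preceding linear-tangent-cone Proposition; both rest on your mechanism ($\chi(F_{in})\neq 0$, then Theorem \ref{thm:acampo_complement} with Theorem \ref{thm:monodromy}), and your uniform check that $\chi(F_{in(\tilde f)})\neq 0$ also covers the multiplicity $1$ and $2$ cases that Proposition \ref{prop:semi_top_mult} excludes. For reducedness, note that $\partial \tilde f/\partial x_1=a_1x_1^{a_1-1}$ confines the critical points of $\tilde f$ on $V(\tilde f)$ to the codimension-two set $\{x_1=0\}\cap V(\tilde f)$, so $\tilde f$ has no repeated factor.
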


\section{On the invariance of the corank}

\subsection{Topological invariance of the corank mod 2}
\begin{theorem}\label{thm:corank_mod_two}
Let $f,g\colon (\C^n,0)\to (\C,0)$ be germs of reduced holomorphic functions with a critical point at the origin. Assume that there is a homeomorphism $\varphi\colon(\C^n,0)\to (\C^n,0)$ such that $\varphi(V(f))=V(g)$. Then ${\rm crk}(f)\equiv {\rm crk}(g) \mod 2$. 
\end{theorem}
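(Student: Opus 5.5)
The plan is to detect the parity of the rank ${\rm rk}(f)=n-{\rm crk}(f)$ through the Lefschetz number $\Lambda(h_f^2)$, which is an ambient topological invariant by Theorem \ref{thm:monodromy}. Concretely, I will show that $\Lambda(h_f^2)=2$ when ${\rm rk}(f)$ is odd and $\Lambda(h_f^2)=0$ when ${\rm rk}(f)$ is even (including rank $0$). Since $\Lambda(h_f^2)=\Lambda(h_g^2)$, this gives ${\rm rk}(f)\equiv{\rm rk}(g)$ mod $2$, hence ${\rm crk}(f)\equiv{\rm crk}(g)$ mod $2$, because both functions live in the same $\C^n$.

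\textbf{Rank at least $1$.} Since $f$ has a critical point at $0$, we have $m(V(f),0)\geq 2$. If ${\rm rk}(f)=r\geq 1$, then $m(V(f),0)=2$ and $in(f)$ is a quadratic form of rank $r$. After a linear change of coordinates, $in(f)=z_1^2+\dots+z_r^2$. Then Proposition \ref{prop:formula_euler_charac_monodromy} gives $\Lambda(h_f^2)=\chi(F_{in(f)})$, and
\[
F_{in(f)}=\{z_1^2+\dots+z_r^2=1\}\times\C^{n-r}.
\]
The first factor is the Milnor fiber of the nondegenerate quadric in $r$ variables, which is homotopy equivalent to $S^{r-1}$. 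Hence $\chi(F_{in(f)})=1+(-1)^{r-1}$, which equals $2$ for $r$ odd and $0$ for $r$ even. This agrees with the Brieskorn--Pham formula used in Proposition \ref{prop:semi_top_mult} with $a=2$.

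\textbf{Rank $0$.} Here $m(V(f),0)\geq 3$. I apply A'Campo's trace formula, quoted in the proof of Proposition \ref{prop:formula_euler_charac_monodromy}:
\[
\Lambda(h_f^2)=\chi(S_1)+2\chi(S_2).
\]
By Claim \ref{claim:complement_proj_tg_cone}, $S_k=\emptyset$ for all $k<m(V(f),0)$. So $S_1=S_2=\emptyset$ and $\Lambda(h_f^2)=0$, consistent with the even case $r=0$.

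\textbf{Conclusion and main check.} Combining the two cases, $\Lambda(h_f^2)\in\{0,2\}$ and it equals $2$ exactly when ${\rm rk}(f)$ is odd. The same holds for $g$. Theorem \ref{thm:monodromy} gives $\Lambda(h_f^2)=\Lambda(h_g^2)$, which finishes the argument. The step that needs the most care is the rank-$0$ case: one must confirm that the vanishing $S_k=\emptyset$ for $k<m$ in Claim \ref{claim:complement_proj_tg_cone} holds for an arbitrary embedded resolution factoring through the blow-up. If it fails there, I would fall back on the first inequality of Theorem \ref{thm:acampo_complement}, namely $m(V(f),0)\leq\inf\{k>0;\Lambda(h_f^k)\neq 0\}$, which with $m\geq 3$ directly forces $\Lambda(h_f^2)=0$.
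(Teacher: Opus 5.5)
Your proof is correct and follows essentially the paper's argument. Both detect the parity of ${\rm rk}$ through the topological invariant $\Lambda(h^2)$: it equals $\chi(F_{in(f)})=1+(-1)^{r-1}$ via Proposition \ref{prop:formula_euler_charac_monodromy} when $m=2$, and it vanishes when $m\geq 3$ by the first inequality of Theorem \ref{thm:acampo_complement}. The paper phrases the latter as ``$\Lambda(h_g^2)\neq 0$ forces $m(V(g),0)\leq 2$'' and then argues by symmetry, so your fallback is exactly its step. Your remark that the critical-point hypothesis already gives $m\geq 2$ also makes the paper's appeal to the A'Campo--L\^e theorem unnecessary.
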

\begin{proof}
Instead of proving ${\rm crk}(f)\equiv {\rm crk}(g) \mod 2$, we are going to prove ${\rm rk}(f)\equiv {\rm rk}(g) \mod 2$.

Firstly, assume that ${\rm rk}(f)\equiv 1 \mod 2$. Then, after a holomorphic change of coordinates, we may assume that
$$
f(z_1,...,z_n)=z_1^2+...+z_{2k+1}^2+h(z_{1},...,z_n),
$$
where $h$ is holomorphic with ${\rm ord}_0h>2$. By Proposition \ref{prop:formula_euler_charac_monodromy},
\[ \Lambda(h_f^2) = \chi(F_{in(f)})=1+(-1)^{2k}=2\not=0. \]
By Theorem \ref{thm:acampo_complement}, $\Lambda(h_g^2)\not= 0$ and $m(V(g),0)\leq 2$. By A'Campo-L\^e's Theorem, $m(V(g),0)=2$. Then, after a holomorphic change of coordinates, we may assume that
$$
g(z_1,...,z_n)=z_1^2+...+z_{s}^2+\tilde h(z_{1},...,z_n),
$$
where $s={\rm rk}(f)$, and $\tilde h$ is holomorphic with ${\rm ord}_0\tilde h>2$. By Proposition \ref{prop:formula_euler_charac_monodromy},
\[ \Lambda(h_g^2) = \chi(F_{in(g)})=1+(-1)^{s-1}. \]
Since $\Lambda(h_g^2) =\Lambda(h_f^2) =2$, then $s$ is an odd number and thus ${\rm rk}(f)\equiv {\rm rk}(g) \mod 2$.

Now, assume that ${\rm rk}(f)\equiv 0 \mod 2$. Then, the first part of this proof forces ${\rm rk}(g)\equiv 0 \mod 2$. Then, ${\rm rk}(f)\equiv {\rm rk}(g) \mod 2$.

Therefore, in any case, ${\rm rk}(f)\equiv {\rm rk}(g) \mod 2$, and thus ${\rm crk}(f)\equiv {\rm crk}(g) \mod 2$.
\end{proof}

\subsection{Answer for the Arnold corank problem in dimensions 2 and 3}
The next result is a consequence of the topological invariance of the multiplicity in $\C^2$ and the Milnor number. However, here we present it as a consequence of Theorem \ref{thm:corank_mod_two}.
\begin{corollary}\label{cor:corank_dim_two}
Let $f,g\colon (\C^2,0)\to (\C,0)$ be germs of reduced holomorphic functions with a critical point at the origin. Assume that there is a homeomorphism $\varphi\colon(\C^2,0)\to (\C^2,0)$ such that $\varphi(V(f))=V(g)$. Then ${\rm crk}(f)= {\rm crk}(g)$. 
\end{corollary}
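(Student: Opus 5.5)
In $\C^2$ the corank of a germ with a critical point at the origin is $0$, $1$ or $2$, so the plan is to combine Theorem \ref{thm:corank_mod_two} with one extra argument that separates the only pair of values it cannot tell apart. By Theorem \ref{thm:corank_mod_two}, ${\rm crk}(f)\equiv{\rm crk}(g) \mod 2$. Hence if ${\rm crk}(f)=1$, then ${\rm crk}(g)$ is odd, so ${\rm crk}(g)=1$. The remaining case is that ${\rm crk}(f),{\rm crk}(g)\in\{0,2\}$, and we must rule out ${\rm crk}(f)=0$ with ${\rm crk}(g)=2$ (the reverse case is symmetric, using $\varphi^{-1}$).

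Suppose ${\rm crk}(f)=0$. By the Morse lemma, after a holomorphic change of coordinates $f=z_1^2+z_2^2$. Then $V(f)$ is two smooth transversal branches, and $0$ is an isolated singularity with $\mu(f)=1$.

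Now suppose ${\rm crk}(g)=2$. Then the $2$-jet of $g$ vanishes, so $m(V(g),0)\geq 3$ (here $g$ is reduced, and $m(V(g),0)=1$ is impossible since $g$ has a critical point). To get a contradiction we use the monodromy invariants, as in the proof of Theorem \ref{thm:corank_mod_two}. By Proposition \ref{prop:formula_euler_charac_monodromy}, $\Lambda(h_f^2)=\chi(F_{in(f)})=1+(-1)^{1}\cdot(-1)\cdots$; more directly, $F_{z_1^2+z_2^2}$ is a cylinder, so $\chi=0$. That computation alone is therefore not decisive. The cleaner route is Theorem \ref{thm:monodromy}, which gives $\chi(F_f)=\chi(F_g)$. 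For isolated singularities in $\C^2$, $\chi(F)=1-\mu$, so $\mu(g)=\mu(f)=1$.

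It then remains to show that a reduced plane germ with $\mu(g)=1$ has corank $0$. If $g$ had an isolated singularity with ${\rm crk}(g)=2$, then $m(V(g),0)\geq 3$, and the standard inequality $\mu\geq (m-1)^2$ gives $\mu(g)\geq 4$, a contradiction. Note that $g$ reduced in $\C^2$ forces an isolated singularity, so this argument applies.

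The only delicate step is justifying $\mu\geq(m-1)^2$ without leaving the paper's toolkit. If needed, I would replace it by A'Campo's formula, Theorem \ref{thm:acampo_complement}. Since $\Lambda(h_f^1)=\chi(F_f)=0$ and $\Lambda(h_f^2)=\chi(F_{in(f)})=0$, that route is weak here, so I would instead rely on the classical fact that $\mu$ is at least $(m-1)^2$, namely the Milnor number of a generic germ with the same initial form.
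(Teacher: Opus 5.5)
Your proof is correct, but it splits the three possible corank values differently from the paper.

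\textbf{How the two routes differ.} The paper uses the classical topological invariance of the multiplicity of plane curves to decide on both sides whether ${\rm rk}=0$, i.e.\ whether $m(V,0)\geq 3$. It then uses Theorem~\ref{thm:corank_mod_two} to tell ${\rm rk}=1$ from ${\rm rk}=2$. You instead use Theorem~\ref{thm:corank_mod_two} to isolate ${\rm crk}=1$. You then show that ${\rm crk}=0$ is a topological invariant, via $\chi(F_f)=\chi(F_g)$ (Theorem~\ref{thm:monodromy}) and $\chi(F)=1-\mu$ for reduced plane curves. This is the same mechanism the paper uses for the case $r=3$ in Theorem~\ref{thm:corank_dim_three}. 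Your route avoids the Zariski invariance of the multiplicity of plane curves entirely. In dimension $2$ either extra ingredient suffices, since only the two even values $0,2$ need separating. In dimension $3$ the paper needs both: multiplicity for ${\rm rk}\in\{0,2\}$ and $\mu$ for ${\rm rk}\in\{1,3\}$.

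\textbf{The step you flag as delicate is unnecessary.} Since $g$ has a critical point, its Jacobian ideal satisfies $J(g)\subset\mathfrak m$. So $\mu(g)=1$ forces $J(g)=\mathfrak m$. By Nakayama, the linear parts of $\partial_1 g,\partial_2 g$ are then independent, i.e.\ the Hessian is nondegenerate and ${\rm crk}(g)=0$. The inequality $\mu\geq(m-1)^2$ is true for plane curves, but it is more than you need.

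\textbf{A side remark.} Your aside ``$\Lambda(h_f^1)=\chi(F_f)$'' is not a valid identity; $\chi(F_f)$ is the Lefschetz number of the identity. Both quantities happen to vanish for $z_1^2+z_2^2$. Since you abandon that route, this does not affect the proof.
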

\begin{proof}
Let $r={\rm rk}(f)$ and $s={\rm rk}(g)$.

Since the multiplicity is an embedded topological invariant in $\C^2$, we have that ${\rm rk}(f)\not= 0$ if and only if ${\rm rk}(g)\not =0$. Thus, we may assume that $r,s\geq 1$, which implies $r,s\in \{1,2\}$.

By Theorem \ref{thm:corank_mod_two}, ${\rm crk}(f)\equiv {\rm crk}(g) \mod 2$. Then $r\equiv s \mod 2$. Since $r,s\in \{1,2\}$, we have $r=s$.
\end{proof}

Recently, Fernandes, Jelonek, and the author of this article in \cite{FernandesJS:2025} proved that the Arnold corank problem in dimension 3 has a positive answer. In this Section, we provide another proof of this result.

\begin{theorem}\label{thm:corank_dim_three}
Let $f,g\colon (\C^3,0)\to (\C,0)$ be germs of reduced holomorphic functions with a critical point at the origin. Assume that there is a homeomorphism $\varphi\colon(\C^3,0)\to (\C^3,0)$ such that $\varphi(V(f))=V(g)$. Then ${\rm crk}(f)= {\rm crk}(g)$. 
\end{theorem}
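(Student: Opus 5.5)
We work with ranks $r={\rm rk}(f)$ and $s={\rm rk}(g)$, which lie in $\{0,1,2,3\}$ since the origin is a critical point. By Theorem \ref{thm:corank_mod_two}, $r\equiv s \mod 2$. So it suffices to rule out the two mixed pairs $\{r,s\}=\{1,3\}$ and $\{r,s\}=\{0,2\}$.

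\textbf{The pair $\{0,2\}$.} Here $r=0$ means $m(V(f),0)\geq 3$, while $s=2$ gives $m(V(g),0)=2$. We invoke the embedded topological invariance of multiplicity $2$ in $\C^3$ (Aznar, or \cite{FernandesJS:2025}), exactly as in Corollary \ref{cor:inv_mult_gsBP_surfaces}. This forces $m(V(f),0)=2$, a contradiction. The symmetric case is the same.

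\textbf{The pair $\{1,3\}$.} Now both multiplicities equal $2$. After a holomorphic change of coordinates we write
$$f=z_1^2+h(z_1,z_2,z_3),\qquad g=z_1^2+z_2^2+z_3^2+\tilde h,$$
with ${\rm ord}_0h,\ {\rm ord}_0\tilde h>2$. By Proposition \ref{prop:formula_euler_charac_monodromy},
$$\Lambda(h_f^2)=\chi(F_{z_1^2})=2,\qquad \Lambda(h_g^2)=\chi(F_{z_1^2+z_2^2+z_3^2})=1+(-1)^{2}=2.$$
Both values agree, so the Lefschetz numbers do not separate the two cases. Instead, we use the rigidity of rank $3$. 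The function $g$ has a nondegenerate critical point, so by the Morse lemma $V(g)$ is an $A_1$ singularity: isolated, with Milnor number $1$, and link $\mathbb{RP}^3$.

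By Theorem \ref{thm:monodromy}, $\chi(F_f)=\chi(F_g)=1+(-1)^{2}\cdot 1=2$, and the homology monodromies are conjugate. It remains to show that $f$ with $r=1$ cannot match these invariants. We split according to whether the singularity of $f$ is isolated.

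\emph{Non-isolated case.} The ambient homeomorphism $\varphi$ preserves the singular locus of the hypersurface, since the singular locus is topologically characterized: regular points are exactly those points near which $V$ is a topological manifold with locally flat embedding, via A'Campo--L\^e. Hence $V(f)$ must also have an isolated singularity.

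\emph{Isolated case.} The Milnor fiber is then a bouquet of $\mu$ spheres of dimension $2$, so $\chi(F_f)=1+\mu(f)$. Equality of Euler characteristics gives $\mu(f)=\mu(g)=1$. But $\mu(f)=1$ forces $f$ to be Morse, that is $r=3$, contradicting $r=1$.

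\textbf{Expected main obstacle.} The delicate step is the passage to isolated singularities, that is, the topological invariance of the dimension of the singular locus. I would try to bypass it entirely with the monodromy. If $V(f)$ has non-isolated singularities, the homology of $F_f$ need not be concentrated in degree $2$. The reduced homology of $F_g$ is $\mathbb{Z}$ in degree $2$ only, with monodromy $-1$ there. Conjugacy of the homology monodromies, from Theorem \ref{thm:monodromy}, then forces $\tilde H_*(F_f)\cong \mathbb{Z}$ in degree $2$. From $\mu=1$ in an appropriate generalized sense (for instance via the L\^e numbers, or via the $\mu$-constant results of Section 4), one still concludes that $f$ is Morse.
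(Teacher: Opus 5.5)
Your proof is correct and follows the paper's approach, with the case split in a different order. You use the same three ingredients: the mod $2$ theorem; the embedded invariance of multiplicity $2$ in $\C^3$, to separate rank $0$ from rank $2$; and the rigidity of rank $3$, where isolatedness together with $\chi(F_f)=\chi(F_g)$ forces $\mu=1$. The paper instead first excludes $r=0$ and $r=3$ and then applies the mod $2$ theorem to $r,s\in\{1,2\}$. Your A'Campo--L\^e argument for preserving the isolated singularity spells out a step the paper leaves implicit, so the speculative monodromy ``bypass'' in your last paragraph is unnecessary and, being vague as written, is best dropped.
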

\begin{proof}
Since ${\rm crk}(f)=3-{\rm rk}(f)$ and ${\rm crk}(g) =3-{\rm rk}(g)$, it is enough to prove that ${\rm rk}(f)={\rm rk}(f)$. Let $r={\rm rk}(f)$ and $s={\rm rk}(g)$.

Since the multiplicity equal to 2 is an embedded topological invariant in $\C^3$ (see \cite{Aznar:1980} or \cite{FernandesJS:2025}), we have that ${\rm rk}(f)\not= 0$ if and only if ${\rm rk}(g)\not =0$. Thus, we may assume that $r,s\geq 1$. 

Suppose that $r=3$. Then, after a change of coordinates, if necessary, we may assume that $in(f)(z_1,z_2,z_3)= z_1^2+z_2^2+z_3^2$. Then $f$ has an isolated singularity at $0$, and $\mu(f)=1$. Then, $g$ also has an isolated singularity at $0$ and $\mu(g)=1$. Therefore, $s=3$ as well.

Thus, we may assume that $r,s\in \{1,2\}$.
By Theorem \ref{thm:corank_mod_two}, ${\rm crk}(f)\equiv {\rm crk}(g) \mod 2$. Then $r\equiv s \mod 2$. Therefore, if $r=1$ then $s=1$, and if $r=2$ then $s=2$.

Therefore, in any case, $r=s$.
\end{proof}

\subsection{Tangent Milnor fiber conjecture implies the Arnold corank problem for right equivalence}
\begin{theorem}
If Conjecture \ref{conj:Milnor_fiber_initial_form_homology} has a positive answer, then the Arnold corank problem for right topological equivalence also has a positive answer.
\end{theorem}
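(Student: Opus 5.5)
Assume Conjecture \ref{conj:Milnor_fiber_initial_form_homology} holds. Let $f,g\colon(\C^n,0)\to(\C,0)$ be reduced with a critical point at $0$, and let $\varphi$ be a homeomorphism with $f=g\circ\varphi$. It suffices to show ${\rm rk}(f)={\rm rk}(g)$. The plan is to read the rank off the homology of the Milnor fiber of the initial form. To do this we first show that either both multiplicities equal $2$ or neither does.

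First, Conjecture \ref{conj:Milnor_fiber_initial_form_homology} gives $\chi(F_{in(f)})=\chi(F_{in(g)})$, so Conjecture \ref{conj:Milnor_fiber_initial_form_Euler_char} holds. Theorem \ref{thm:zariski_equiv_tangent_conj_Euler_char} then gives Conjecture \ref{conj:zariski_right}, so $m(V(f),0)=m(V(g),0)=:m$. Since the origin is a critical point, $m\ge 2$. If $m>2$, then ${\rm rk}(f)={\rm rk}(g)=0$ and we are done. So assume $m=2$. After holomorphic changes of coordinates we may write $in(f)=z_1^2+\cdots+z_r^2$ and $in(g)=z_1^2+\cdots+z_s^2$, where $r={\rm rk}(f)\ge 1$ and $s={\rm rk}(g)\ge 1$.

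Next we compute the homology of these fibers. The Milnor fiber $F_{in(f)}=\{z_1^2+\cdots+z_r^2=1\}\times\C^{n-r}$ is homotopy equivalent to the affine quadric $\{\sum_{i=1}^r z_i^2=1\}$. This quadric is diffeomorphic to the tangent bundle $TS^{r-1}$, so it is homotopy equivalent to $S^{r-1}$; for $r=1$ it is two points, $S^0$. Similarly $F_{in(g)}\simeq S^{s-1}$. Conjecture \ref{conj:Milnor_fiber_initial_form_homology} says these have the same homology, so $H_*(S^{r-1})\cong H_*(S^{s-1})$. The spheres $S^{r-1}$ are distinguished by the degree of their nontrivial reduced homology, and for $S^0$ by $H_0=\mathbb{Z}^2$. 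Hence $r=s$, so ${\rm crk}(f)=n-r=n-s={\rm crk}(g)$.

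The main obstacle is the first step: deducing the equality of multiplicities, and specifically ruling out $m(V(f),0)=2<m(V(g),0)$. Equality of homology alone does not obviously prevent this, which is why we route through the equivalence of Theorem \ref{thm:zariski_equiv_tangent_conj_Euler_char}. That step needs the hypothesis applied to the suspended functions $f+t^k$ and $g+t^k$, and the hypothesis covers them since it is assumed for all germs in all dimensions.

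As a check of the case $m=2$, we can avoid the full conjecture there. By Proposition \ref{prop:formula_euler_charac_monodromy} and Theorem \ref{thm:monodromy}, $1+(-1)^{r-1}=\chi(F_{in(f)})=\Lambda(h_f^2)=\Lambda(h_g^2)$. So if $r$ is odd, Theorem \ref{thm:acampo_complement} and A'Campo-L\^e's Theorem force $m(V(g),0)=2$, as in the proof of Theorem \ref{thm:corank_mod_two}. When $r$ is even, however, we genuinely need the multiplicity equality coming from the conjecture.
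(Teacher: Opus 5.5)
Your proof is correct and follows the same structure as the paper's: obtain $m(V(f),0)=m(V(g),0)$, reduce to $m=2$, and then distinguish the ranks through $F_{in(f)}\simeq S^{r-1}$ and $F_{in(g)}\simeq S^{s-1}$. The one difference is how you get the multiplicity equality. The paper simply cites \cite{Sampaio:2026} for it. You instead derive it inside this paper: equal homology gives equal Euler characteristics (these fibers are smooth affine varieties, so their homology is finitely generated), and then Theorem \ref{thm:zariski_equiv_tangent_conj_Euler_char} applies, since its proof legitimately uses the hypothesis on the suspensions $f+t^k$ and $g+t^k$ in $n+1$ variables. This makes your argument self-contained and shows that the multiplicity step needs only the Euler-characteristic consequence of the hypothesis.
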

\begin{proof}
Let $f,g\colon (\C^n,0)\to (\C,0)$ be germs of reduced holomorphic functions with a critical point at the origin, and $\varphi\colon(\C^n,0)\to (\C^n,0)$ be a homeomorphism such that $f=g\circ \varphi$.

Assume that Conjecture \ref{conj:Milnor_fiber_initial_form_homology} has a positive answer. By \cite{Sampaio:2026}, $m(V(f),0)=m(V(g),0)$.
We may assume that $m(V(f),0)=m(V(g),0)=2$.
Let $r={\rm rk}(f)$ and $s={\rm rk}(g)$. Thus, $r,s\geq 1$.
Then $\widetilde{H}_{r-1}(in(f)^{-1}(1))=\mathbb{Z}$, $\widetilde{H}_{j}(in(f)^{-1}(1))=0$ for $j\not\in\{0,r-1\}$, and
$$
\widetilde{H}_0(in(f)^{-1}(1))=\left\{\begin{array}{ll}
\mathbb{Z},     & \mbox{if } r=1 \\
0,     &  \mbox{if } r>1.
\end{array}\right.
$$
Similarly, $\widetilde{H}_{s-1}(in(g)^{-1}(1))=\mathbb{Z}$, $\widetilde{H}_{j}(in(g)^{-1}(1))=0$ for $j\not\in\{0,s-1\}$, and 
$$
\widetilde{H}_0(in(g)^{-1}(1))=\left\{\begin{array}{ll}
\mathbb{Z},     & \mbox{if } s=1 \\
0,     &  \mbox{if } s>1.
\end{array}\right.
$$
Since Conjecture \ref{conj:Milnor_fiber_initial_form_homology} has a positive answer, we have 
$$
\widetilde{H}_j(in(f)^{-1}(1))\cong \widetilde{H}_j(in(g)^{-1}(1))\text{ for all }j. 
$$
Then $r=s$ and therefore ${\rm crk}(f)={\rm crk}(g).$
\end{proof}

\subsection{The Arnold corank problem has a positive answer for families with isolated singularities}
\begin{theorem}\label{thm:corank_family}
Let $\{f_t\}_t$ be a continuous family of holomorphic functions. If the Milnor number $\mu(f_t)$ is independent of $t$, finite, and positive, then the corank ${\rm crk}(f_t)$ is also independent of $t$.
\end{theorem}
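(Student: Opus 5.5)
The plan is to combine lower semicontinuity of the rank of the Hessian with the Lefschetz-number invariants already used in the proof of Theorem \ref{thm:corank_mod_two}. Since the problem is local in $t$, I fix $t_0$ and compare $f_t$ with $f_{t_0}$ for $t$ near $t_0$. The Hessian of $f_t$ at $0$ depends continuously on $t$, so its rank is lower semicontinuous. Hence ${\rm crk}(f_t)\le {\rm crk}(f_{t_0})$ for $t$ close to $t_0$, and it remains to exclude a strict drop of the corank.

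\textbf{Step 1: constancy of the monodromy data.} Since $\mu(f_t)$ is finite and constant, the Milnor fibrations of the $f_t$ form a locally trivial family over the parameter, by L\^e's argument for $\mu$-constant families (the Milnor balls can be chosen uniformly in $t$). So the homological monodromies of $f_t$ and $f_{t_0}$ are conjugate, and $\Lambda(h_{f_t}^k)=\Lambda(h_{f_{t_0}}^k)$ for every $k$. This replaces Theorem \ref{thm:monodromy} in a setting where we do not have an ambient homeomorphism. I expect this step to need only a careful citation.

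\textbf{Step 2: reduction to the corank variables.} By the Splitting Lemma, after a holomorphic change of coordinates we can write
\[
f_{t_0}=z_1^2+\cdots+z_r^2+h_{t_0}(z_{r+1},\dots,z_n),
\]
where $r={\rm rk}(f_{t_0})$ and ${\rm ord}_0h_{t_0}\ge 3$. I would like a parametric version of this, valid for $t$ near $t_0$:
\[
f_t\cong z_1^2+\cdots+z_r^2+h_t(z_{r+1},\dots,z_n), \qquad \mu(h_t)=\mu(f_t)=\mu.
\]
With this, a drop of corank at $t\neq t_0$ means that $h_t$ has order $2$ with some rank $\rho\ge 1$, while $h_{t_0}$ has order $\ge 3$. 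So the statement reduces to showing that multiplicity $2$ cannot specialize to multiplicity $\ge 3$ inside a $\mu$-constant family.

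\textbf{Step 3: using Lefschetz numbers.} For $h_{t_0}$, the multiplicity is $\ge 3$, so Theorem \ref{thm:acampo_complement} gives $\Lambda(h_{h_{t_0}}^2)=0$. For $h_t$, Proposition \ref{prop:formula_euler_charac_monodromy} gives $\Lambda(h_{h_t}^2)=\chi(F_{in(h_t)})=1+(-1)^{\rho-1}$. If $\rho$ is odd this equals $2$, contradicting Step 1 applied to $h_t$. This argument also shows that the parity of the corank is constant, exactly as in Theorem \ref{thm:corank_mod_two}.

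\textbf{Main obstacle: $\rho$ even.} Here $\Lambda(h^2)$ vanishes on both sides, so the mod-$2$ argument gives nothing. Adding $w^2$ to $h_t$ only shifts the parity on both sides simultaneously, so it does not help either. My first attempt would be to use the full characteristic polynomial of the monodromy, which is constant by Step 1, rather than only $\Lambda(h^2)$. I would write the monodromy of $h_t$ via Thom--Sebastiani on the splitting $Q_\rho\oplus h'_t$, which multiplies eigenvalues by $(-1)^\rho$. I would then compare the multiplicity of the eigenvalue $1$ with A'Campo's formula $\Lambda(h^k)=\sum_{d\mid k}d\,\chi(S_d)$ for $h_{t_0}$, using $S_1=S_2=\emptyset$. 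If this fails, a fallback is to stabilize by Thom--Sebastiani with $w^3$ or $w^k$. That changes the A'Campo strata in a controlled way, and the aim would be to produce a nonzero Lefschetz number at the index $k$ equal to the order of the stabilized germ, rather than at $2$.

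I am not confident this case closes by these means. If it does not, the proof would need an additional input, for instance upper semicontinuity of the multiplicity together with invariance of multiplicity $2$ in $\mu$-constant families.
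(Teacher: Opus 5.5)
\textbf{Gap: the even-$\rho$ case is not closed, and none of your proposed remedies can close it.}

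Your own Thom--Sebastiani remark shows why. For even $\rho$ the block $Q_\rho$ multiplies the eigenvalues by $(-1)^\rho=1$. So the monodromy of $h_t$ is just that of the order-$\ge 3$ germ $h_t'$, and the quadratic part leaves no trace in it.

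Each of your proposed remedies fails for this reason:
\begin{itemize}
\item \emph{Full characteristic polynomial.} For an isolated singularity the reduced homology of the Milnor fiber sits in one degree. So the characteristic polynomial, including the multiplicity of the eigenvalue $1$, carries the same information as the sequence $\Lambda(h^k)$, equivalently the numbers $\chi(S_d)$ in A'Campo's formula. On the $h_t$ side $\chi(S_2)=\chi(F_{in(h_t)})/2=0$, which is compatible with $S_2(h_{t_0})=\emptyset$.
\item \emph{Stabilizing by $w^k$, $k\ge 3$.} This does not change $in(h_t)$. The Euler characteristic of the complement of the projectivized tangent cone of $h_t+w^k$ therefore stays $0$, and the equality case of Theorem \ref{thm:acampo_complement} is never available for the multiplicity-$2$ member. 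Moreover, by Thom--Sebastiani the monodromy of $h+w^k$ is determined by that of $h$, so your Step 1 yields no new constraint.
\end{itemize}
The stabilization trick works in Theorem \ref{thm:zariski_equiv_tangent_conj_Euler_char} only because there one is given the equality of $\chi(F_{in})$ for the stabilized pair. In the even case you would be proving a genuine instance of equimultiplicity of $\mu$-constant families, and the monodromy does not detect it.

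\textbf{Missing ingredient.} What is needed is the theorem of Fern\'andez de Bobadilla and Pe{\l}ka that $\mu$-constant families are equimultiple, \cite[Theorem 1.1]{BobadillaP:2022}. The paper uses it as follows:
\begin{itemize}
\item Applied to $f_t$, it shows the multiplicity is constant. Hence either all $r_t={\rm rk}(f_t)$ are $0$, or all are $\ge 1$.
\item In the latter case $in(f_t)$ is a quadratic form of rank $r_t$, so $F_t=in(f_t)^{-1}(1)\simeq S^{r_t-1}$.
\item \cite[Remark 8.2]{BobadillaP:2022} gives $H^{BM}_*(F_t)\cong H^{BM}_*(F_0)$. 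By Poincar\'e duality $H^*(F_t)\cong H^*(F_0)$, hence $r_t=r_0$.
\end{itemize}
This handles both parities at once, with no splitting lemma or semicontinuity.

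Once you grant that theorem, your own route also closes immediately. Apply it to the $\mu$-constant family $h_t$ from your parametric splitting, using $\mu(h_t)=\mu(f_t)$. Then $m(h_t)=m(h_{t_0})\ge 3$, so the corank cannot drop. This route avoids Remark 8.2 but needs a parametric splitting lemma for continuous families, and it makes your Steps 1 and 3 unnecessary.
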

\begin{proof} 
For each $t$, let $F_{t}=\{x\in \C^n;in(f_t)(x)=1\}$.
Let $r_t={\rm rk}(f_t)$. By \cite[Thereom 1.1]{BobadillaP:2022}, we may assume $r_t,r_0\geq 1$.

Then $\widetilde{H}^{r_t-1}(F_t)=\mathbb{Z}$, $\widetilde{H}^{j}(F_t)=0$ for $j\not\in\{0,r_t-1\}$, and
$$
\widetilde{H}^0(F_t)=\left\{\begin{array}{ll}
\mathbb{Z},     & \mbox{if } r_t=1 \\
0,     &  \mbox{if } r_t>1.
\end{array}\right.
$$
By \cite[Remark 8.2]{BobadillaP:2022}, $H_*^{BM}(F_t)=H_*^{BM}(F_0)$, where $H_*^{BM}(F)$ denotes the Borel-Moore homology of $F$. Since each $F_t$ is an oriented smooth manifold of real dimension $2n-2$, Poincaré duality gives an isomorphism modulo torsion between Borel-Moore homology and singular cohomology:
$$
H_*^{BM}(F_t)=H^{2n-2-*}(F_t).
$$
So, $H^{*}(F_t)=H^{*}(F_0)$, and then $r_t=r_0$. Therefore, ${\rm crk}(f_t)={\rm crk}(f_0).$
\end{proof}

We recall that the Arnold corank problem does not require the hypersurface singularities to be isolated. The two subsequent corollaries summarize some direct consequences of our results for non-isolated singularities.

Here, we are following closely \cite{BobadillaP:2022}.
        
In the setting of non-isolated singularities, a proper analogue of the Milnor number is the sequence of \emph{L\^e numbers}, introduced in \cite[Definition 2.1]{Massey:1990}. Thus, we have the following generalization of Theorem \ref{thm:corank_family}:
    
\begin{corollary}\label{cor:corank_non-isolated_one}
	Let  $f_t\colon (\C^n,0)\to (\C,0)$ be a continuous family of germs of holomorphic functions with a (possibly non-isolated) critical point at the origin. Assume that for some coordinate system on $\C^n$, all L\^e numbers of $f_t$ are independent of $t$.
	Then the corank of $f_t$ is independent of $t$.
\end{corollary}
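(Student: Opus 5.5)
The plan is to mirror the proof of Theorem \ref{thm:corank_family}, with the constancy of the Milnor number replaced by the constancy of the Lê numbers. Following \cite{BobadillaP:2022}, I would use the result proved there for non-isolated singularities: if all Lê numbers of $f_t$ are constant in some coordinate system, then the multiplicity $m(V(f_t),0)$ is independent of $t$. In that setting \cite{BobadillaP:2022} also gives the Borel--Moore version of the tangent Milnor fiber statement, namely that $H_*^{BM}(F_t)\cong H_*^{BM}(F_0)$ for $F_t=in(f_t)^{-1}(1)$. These two facts play the roles that \cite[Theorem 1.1]{BobadillaP:2022} and \cite[Remark 8.2]{BobadillaP:2022} played in the isolated case.

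\textbf{Step 1: reduce to multiplicity two.} Set $r_t={\rm rk}(f_t)$. Since ${\rm crk}(f_t)=n-r_t$, it suffices to show that $r_t$ is constant.
\begin{itemize}
\item If $m(V(f_t),0)\geq 3$ for all $t$, then every Hessian vanishes, so $r_t=0$ for all $t$ and there is nothing to prove.
\item Otherwise constancy of the multiplicity gives $m(V(f_t),0)=2$ for all $t$, so $r_t\geq 1$.
\end{itemize}
In the second case, after a holomorphic change of coordinates (depending on $t$, which is harmless because we only compare topological invariants), we have $in(f_t)=z_1^2+\cdots+z_{r_t}^2$.

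\textbf{Step 2: compute the cohomology of $F_t$.} Here $F_t\cong Q_{r_t}\times\C^{n-r_t}$, where $Q_{r_t}$ is the affine quadric $\{z_1^2+\cdots+z_{r_t}^2=1\}\subset\C^{r_t}$. The quadric $Q_{r_t}$ is homotopy equivalent to $S^{r_t-1}$, so
$$\widetilde H^{j}(F_t)=\mathbb{Z} \text{ exactly for } j=r_t-1 \text{ (two points when } r_t=1\text{)}, \qquad \widetilde H^{j}(F_t)=0 \text{ otherwise}.$$

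\textbf{Step 3: transfer via Poincaré duality.} Each $F_t$ is an oriented smooth manifold of real dimension $2n-2$. Poincaré duality therefore converts the Borel--Moore homology isomorphism into
$$H^{*}(F_t)\cong H^{*}(F_0)$$
modulo torsion, which suffices since the groups in Step 2 are free. The unique degree carrying nontrivial reduced cohomology is then the same for $t$ and $0$. This gives $r_t-1=r_0-1$, hence ${\rm crk}(f_t)={\rm crk}(f_0)$.

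\textbf{Main obstacle.} The real content lies in checking that \cite{BobadillaP:2022} actually provides, under constancy of the Lê numbers, both the multiplicity constancy and the Borel--Moore homology statement for the tangent cone Milnor fibers. Their Remark 8.2 is formulated for topologically trivial families, and one must confirm that Lê-number constancy yields the needed input in the non-isolated setting.

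If only multiplicity constancy is available there, I would use a fallback. By Lê's results, the fibrations are then equivalent, so Theorem \ref{thm:monodromy} gives $\Lambda(h_{f_t}^2)=\Lambda(h_{f_0}^2)$. Combined with Proposition \ref{prop:formula_euler_charac_monodromy}, this yields
$$1+(-1)^{r_t-1}=1+(-1)^{r_0-1}.$$
This recovers only the parity of the corank, as in Theorem \ref{thm:corank_mod_two}, so the full homology comparison is genuinely required.
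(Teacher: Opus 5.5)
\textbf{There is a genuine gap: the Borel--Moore input is missing.} Your Steps 1--3 would go through if you had the isomorphism $H_*^{BM}(F_t)\cong H_*^{BM}(F_0)$ for families with constant L\^e numbers. That is exactly the step you leave open in your ``main obstacle''. As this paper uses it, \cite[Remark 8.2]{BobadillaP:2022} concerns families of \emph{isolated} singularities. You give no argument that it survives when the critical locus of $f_t$ is positive-dimensional. Your fallback only recovers the parity of the corank, as you note yourself. So as written the proof is incomplete.

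\textbf{The missing idea is to reduce to the isolated case with the uniform L\^e--Iomdine formulas \cite[Theorem 4.5]{Massey:1995}.} This is what the paper does. Applied inductively, these formulas give an exponent $j>2$ (which may be taken as large as you like), some $k\le n$, and $a_1,\dots,a_k\in\C\setminus\{0\}$ with the following property: $g_t=f_t+a_1z_1^j+\dots+a_kz_k^j$ has an isolated singularity whose only L\^e number, the Milnor number $\mu(g_t)$, is independent of $t$. It is positive because $0$ is still a critical point. Since $j>2$, $g_t$ and $f_t$ have the same $2$-jet, hence the same Hessian, so ${\rm crk}(g_t)={\rm crk}(f_t)$. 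Theorem~\ref{thm:corank_family} applied to the $\mu$-constant family $g_t$ then finishes the proof, without redoing any cohomology computation. The same trick also repairs your own framework. Choose $j$ larger than the constant multiplicity $m(V(f_t),0)$; then $in(g_t)=in(f_t)$. The Borel--Moore statement for the isolated family $g_t$, used as in the proof of Theorem~\ref{thm:corank_family}, is then precisely the statement you need for $F_t$.
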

\begin{proof} 
Applying inductively the uniform L\^e--Iomdine formulas \cite[Theorem 4.5]{Massey:1995}, we have that for some $j>2$, $k\in \{0,\dots, n\}$, and $a_1,\dots,a_k\in \C\setminus \{0\}$, the singularity $g_t=f_{t}+a_1z_1^{j}+\dots+a_kz_{k}^{j}$ is isolated, and its sequence of L\^e numbers, which in this case consists solely of the Milnor number $\mu(g_t)$ (see \cite[Example 2.1]{Massey:1995}), is independent of $t$.
    
 By Theorem \ref{thm:corank_family}, the corank of $g_t$ is independent of $t$. Since $j>2$, we have that ${\rm crk}(f_t)={\rm crk}(g_t)$; thus, the corank of $f_t$ is independent of $t$.
\end{proof}

Since L\^e numbers are not a topological invariant \cite[Theorem A(i)]{Bobadilla:2005}, Corollary \ref{cor:corank_non-isolated_one} does not imply a positive answer to the Arnold corank problem for families in general. 

However, it is known that for aligned singularities, the  L\^e numbers are topological invariants (see \cite[Corollary 7.8]{Massey:1995}). 
Recall that a germ of a hypersurface singularity $f\colon (\C^n,0)\to (\C,0)$ is {\bf aligned}, see \cite[Definition 7.1]{Massey:1995}, if its singular locus admits a stratification satisfying the Thom $a_{f}$ condition, such that the closure of each stratum is smooth at the origin. 
	
So, we obtain the following result for families of aligned singularities.
\begin{corollary}\label{cor:corank_non-isolated_two}
	Let  $f_t\colon (\C^n,0)\to (\C,0)$ be a continuous family of germs of holomorphic functions with a (possibly non-isolated) critical point at the origin. Assume that the embedded topological type of $f_t$ is independent of $t$, and each $f_t$ has aligned singularities.
	Then the corank of $f_t$ is independent of $t$.
\end{corollary}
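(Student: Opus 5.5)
The plan is to reduce to Corollary \ref{cor:corank_non-isolated_one} by showing that, for aligned singularities, constancy of the embedded topological type forces constancy of the L\^e numbers in a suitable coordinate system.

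\textbf{Step 1: topological invariance of the L\^e numbers.} Since each $f_t$ has aligned singularities and the embedded topological type of $f_t$ does not depend on $t$, \cite[Corollary 7.8]{Massey:1995} says that the L\^e numbers of $f_t$, computed in aligned coordinates, are determined by the embedded topological type. Hence they are independent of $t$.

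\textbf{Step 2: a common coordinate system.} Corollary \ref{cor:corank_non-isolated_one} requires the L\^e numbers to be constant for \emph{some} fixed coordinate system on $\C^n$. So I need a single system that is generic and aligned for every $f_t$ at once, or at least along a neighbourhood of each parameter value.

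\textbf{Step 3: apply Corollary \ref{cor:corank_non-isolated_one}.} With Steps 1 and 2 in hand, that corollary gives that ${\rm crk}(f_t)$ is independent of $t$. The mechanism behind it is the L\^e--Iomdine perturbation $g_t=f_t+\sum_i a_iz_i^j$ with $j>2$. This perturbation does not change the Hessian, and it produces an isolated singularity with constant Milnor number, to which Theorem \ref{thm:corank_family} applies.

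\textbf{Main obstacle: Step 2.} Massey's invariance statement concerns L\^e numbers in aligned coordinates for each individual germ, and the aligned coordinates could a priori move with $t$. I would proceed as follows:
\begin{enumerate}
\item Use that aligned and generic coordinates form a nonempty Zariski-open condition on linear coordinate changes for each $t$.
\item Use continuity of the family to argue that one linear change of coordinates can be chosen to work uniformly near any $t_0$.
\item Conclude that $t\mapsto{\rm crk}(f_t)$ is locally constant, and hence constant because the parameter space is connected.
\end{enumerate}
If a uniform choice fails, the fallback is to apply the argument to pairs $(t_0,t)$ with $t$ close to $t_0$, since only local constancy of the corank is needed.
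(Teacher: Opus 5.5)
Your Steps 1 and 3 are exactly the paper's proof. Massey's Corollary 7.8 gives $t$-independence of the L\^e numbers, and Corollary~\ref{cor:corank_non-isolated_one} then yields constancy of the corank, via the L\^e--Iomdine perturbation with $j>2$ and Theorem~\ref{thm:corank_family}.

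The paper does not carry out your Step 2: it reads Massey's result as already giving constant L\^e numbers in a fixed coordinate system, so that step is extra care on your part. As sketched, though, it is not yet an argument. Item (2) is only asserted for a family that is merely continuous in $t$, and the pairwise fallback would not help, because Corollary~\ref{cor:corank_non-isolated_one} needs constancy along a whole family, not equal L\^e numbers for two members.
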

\begin{proof}
By \cite[Corollary 7.8]{Massey:1995}, for any coordinate system on $\C^n$, all L\^e numbers of $f_t$ are independent of $t$. By Corollary \ref{cor:corank_non-isolated_one}, the corank of $f_t$ is independent of $t$.
\end{proof}

Theorem \ref{thm:corank_family} also implies an analogous statement in a purely algebraic setting.

\begin{corollary}\label{rem:algebraic_corank}
Let $k$ be a field of characteristic zero and $f\in k[t][\![ z_1,\dots,z_n ]\! ]$. Write $f_{t}=f(t,\cdot)\in k [\![ z_1,\dots,z_n ]\! ]$; so $t\mapsto f_{t}$ is an algebraic family of formal power series. Assume that the Milnor number of $f_{t}$ is finite, positive, and independent of $t$. Then the corank of $f_t$ is independent of $t$. 
\end{corollary}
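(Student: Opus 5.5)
The plan is to reduce this algebraic statement to Theorem \ref{thm:corank_family} by the Lefschetz principle, together with some care about specializing $t$.

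First, the finitely many coefficients that matter can be placed in a subfield of $\C$. The Milnor number $\mu(f_t)=\dim_{k(t)}k(t)[\![z]\!]/(\partial f/\partial z_i)$ and the corank are determined by finitely many Taylor coefficients, since $\mu$ finite means $\mathfrak m^{N}\subset J(f_t)$ for some $N$ by finite determinacy. Each of these coefficients is a polynomial in $t$ with coefficients in $k$. So I would let $k_0\subset k$ be the subfield generated over $\Q$ by the coefficients of the jet $j^{\mu+2}f$, viewed as an element of $k[t][z]$. Then $k_0$ is finitely generated over $\Q$, hence embeds into $\C$.

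Next, the specialization must respect these invariants. By finite determinacy, $f_t$ is right equivalent over $\bar k$ to its $(\mu+1)$-jet, which is a polynomial family $F(t,z)$ with coefficients in $k_0[t]$. The Milnor number, computed by a standard basis or Gröbner computation in the local ring, is stable under field extension. The corank is the corank of the Hessian, which is a matrix with entries in $k_0[t]$. So both the hypothesis and the conclusion are statements about $F$ over $k_0$, and they are preserved under any embedding $k_0\hookrightarrow\C$.

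Over $\C$, I would consider $t\in\C$ with $F_t\colon(\C^n,0)\to(\C,0)$ a polynomial family, which is in particular continuous. The main obstacle is that ``independent of $t$'' in the algebraic statement means for $t$ ranging over $k$, or generically in $k[t]$. To handle this I would prove that $\mu(F_t)$ is constant for all $t\in\C$. The Milnor number is upper semicontinuous and, as a function of $t$, is constant equal to its generic value outside a Zariski-closed set. The hypothesis that $\mu(f_t)$ is the same for all $t\in k$, an infinite field, forces the values at $k$-points to coincide with the generic value. I would therefore restrict the family to a Zariski-open, connected subset $U\subset\C$ containing the $k_0$-points, or to a path in $\C$ joining any two such parameters along which $\mu$ equals the generic value. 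Both routes use that $\mu$ jumps only on a finite set.

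On such a path, Theorem \ref{thm:corank_family} gives constancy of ${\rm crk}(F_t)$, hence ${\rm crk}(f_{t_1})={\rm crk}(f_{t_2})$ for any $t_1,t_2\in k$. The claim that the rank of the Hessian matrix, a matrix over $k_0[t]$, does not depend on the field it is computed in is routine, and it completes the argument.
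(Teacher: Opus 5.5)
Your proposal is correct and takes the same route as the paper: truncate to the $(\mu+1)$-jet by finite determinacy, pass to the finitely generated field $k_0$ of coefficients, embed $k_0\hookrightarrow\C$, and apply Theorem \ref{thm:corank_family}. Your semicontinuity step (the jump set of $\mu$ is finite and misses every $k_0$-point, since infinitely many $k_0$-points carry the common value, so you work on its connected complement) justifies something the paper only asserts, namely that the complexified family is $\mu$-constant, which is false for $t$ ranging over all of $\C$ (e.g.\ $x^2+(t^2-2)y^2+y^3$ over $k=\Q$ jumps at $t=\pm\sqrt{2}$); one small fix is to adjoin the two parameters $t_1,t_2\in k$ you compare to $k_0$ before embedding, so that they actually become points of $\C$.
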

\begin{proof}
For each $t$, the formal power series $f_t$ has a finite Milnor number $\mu$, implying it is finitely determined. Because the Milnor number is assumed to be independent of $t$, the degree of determinacy is uniformly bounded across the entire family. So, we can replace the formal power series $f$ by its polynomial truncation $f^{(N)} = \sum_{|I|<N} a_{I} z^{I}$ for some integer $N$ (for instance, $N = \mu + 2$) that is independent of $t$. Here, the coefficients $a_{I}$ belong to $k[t]$. This truncation is right-equivalent to the original series $f_t$; consequently, it preserves the Milnor number and the corank (which is completely determined by the $2$-jet of $f_t$).

The truncated family is defined by a finite collection of polynomials $a_I(t) \in k[t]$. Let $S \subset k$ be the finite set consisting of all coefficients of these polynomials $a_I$. We define $k_{0} = \Q(S)$ as the field extension of $\Q$ generated by the elements of $S$. Since $k_{0}$ is a finitely generated field extension of $\Q$, we have an embedding $k_{0}\hookrightarrow \C$. 

Now, the algebraic family $\{f_{t}\}_t$ can be treated as a family in $\C[t][z_1,\dots,z_n]$. Under this identification, $\{f_{t}\}_t$ constitutes a $\mu$-constant family of isolated complex hypersurface singularities. By Theorem \ref{thm:corank_family}, the corank of $f_{t}$ does not depend on $t$.

\end{proof}

\end{document}